\newtheorem{lemma}{Lemma}[section]
\newtheorem{theorem}[lemma]{Theorem}
\newtheorem{proposition}[lemma]{Proposition}
\newtheorem{corollary}[lemma]{Corollary}
\newtheorem{claim*}{Claim}
\newtheorem{remark}[lemma]{Remark}
\newtheorem{definition}[lemma]{Definition}
\def\O{\mathcal{O}}
\newcommand{\Q}{{\mathbb Q}}
\newcommand{\C}{{\mathbb C}}
\newcommand{\Z}{{\mathbb Z}}
\newcommand{\F}{{\mathbb F}}
\newcommand{\Inv}{\operatorname{Inv}}
\newcommand{\into}{\hookrightarrow}
\DeclareMathOperator{\Sp}{Sp}
\DeclareMathOperator{\Jac}{Jac}
\DeclareMathOperator{\GL}{GL}
\DeclareMathOperator{\Sym}{Sym}
\DeclareMathOperator{\SL}{SL}
\DeclareMathOperator{\Disc}{Disc}
\DeclareMathOperator{\diag}{diag}
\DeclareMathOperator{\End}{End}
\newcommand{\Kilicer}{K\i{}l\i{}\c{c}er}
\newcommand{\Manzateanu}{M\^{a}nz\u{a}\c{t}eanu}
\numberwithin{equation}{section}
\numberwithin{table}{section}
\begin{document}

\title{Modular invariants for genus 3 hyperelliptic curves
}

\author{Sorina Ionica}
\address{Sorina Ionica, Laboratoire MIS, Universit\'e de Picardie Jules Verne, 33 Rue Saint Leu, Amiens 80039, France}
\email{sorina.ionica@u-picardie.fr}

\author{P{\i}nar K{\i}l{\i}\c{c}er} 
\thanks{Most of K{\i}l{\i}\c{c}er's work was carried out during her stay in Universiteit Leiden and Carl von Ossietzky Universit\"at Oldenburg. }
\address{P{\i}nar K{\i}l{\i}\c{c}er,
Johann Bernoulli Instituut voor Wiskunde en Informatica, 
Rijksuniversiteit Groningen, Nijenborgh 9,
9747 AG Groningen, Netherlands}
\email{p.kilicer@rug.nl}

\author{Kristin Lauter}
\address{Kristin Lauter, Microsoft Research, One Redmond Way, Redmond, WA 98052, USA}
\email{klauter@microsoft.com}

\author{Elisa Lorenzo Garc\'ia}
\address{Elisa Lorenzo Garc\'ia, Laboratoire IRMAR, Office 602,
Universit\'e de Rennes 1, Campus de Beaulieu, 35042, Rennes Cedex, France}
\email{elisa.lorenzogarcia@univ-rennes1.fr }

\author{Maike Massierer}
\thanks{Maike Massierer was supported by the Australian Research Council (DP150101689).}
\address{Maike Massierer, School of Mathematics and Statistics, University of New South Wales, Sydney NSW 2052, 
Australia}
\email{maike.massierer@gmail.com}

\author{Adelina \Manzateanu}
\address{Adelina \Manzateanu, Institute of Science and Technology Austria. School of Mathematics, University of Bristol, Bristol, BS8 1JR, UK}
\email{am15112@bristol.ac.uk}

\author{Christelle Vincent}
\thanks{Vincent is supported by the National Science Foundation under Grant No. DMS-1802323.}
\address{Christelle Vincent, Department of Mathematics and Statistics, University of Vermont, 16 Colchester Avenue, Burlington VT 05401}
\email{christelle.vincent@uvm.edu}

\makeatletter
\let\@wraptoccontribs\wraptoccontribs
\makeatother


\maketitle
\markleft{S. IONICA ET AL.}
\begin{abstract}
In this article we prove an analogue of a theorem of Lachaud, Ritzenthaler, and Zykin, which allows us to connect invariants of binary octics to Siegel modular forms of genus 3. We use this connection to show that certain modular functions, when restricted to the hyperelliptic locus, assume values whose denominators are products of powers of primes of bad reduction for the associated hyperelliptic curves. We illustrate our theorem with explicit computations. This work is motivated by the study of the values of these modular functions at CM points of the Siegel upper-half space, which, if their denominators are known, can be used to effectively compute models of (hyperelliptic, in our case) curves with CM.
\end{abstract}

\section{Introduction} \label{sec:intro}

In his beautiful paper, Igusa~\cite{igusa67} proved that there is a homomorphism from a subring (containing forms of even weight) of the graded ring of Siegel modular forms of genus $g$ and level $1$ to the graded ring of invariants of binary forms of degree $2g+2$.
In this paper, we consider Siegel modular functions which map to invariants of hyperelliptic curves under this homomorphism, and are thus called {\it modular invariants}.

We are interested in the primes that divide the denominators of certain quotients of these modular invariants.\footnote{Here by denominator we mean the least common multiple of the (rational) denominators that appear in an algebraic number's monic minimal polynomial.} Our work is motivated by the following computational problem: To recognize the value of a modular invariant as an exact algebraic number from a floating point approximation, one must have a bound on its denominator. Furthermore, the running time of the algorithm is greatly improved when the bound is tight.

Igusa~\cite{igusa67} gave an explicit construction of the above-mentioned homomorphism for all modular forms of level 1 which can be written as polynomials in the theta-constants. Our first contribution is an analogue of a result of Lachaud, Ritzenthaler, and Zykin \cite[Corollary~3.3.2]{LRZ}, which connects Siegel modular forms to invariants of plane quartics. Using a similar approach, which first connects Siegel modular forms to Teichm\"uller modular forms, we obtain a construction which is equivalent to Igusa's for modular forms of even weight. We then compute the image of the discriminant of a hyperelliptic curve under this homomorphism, thus extending and rephrasing a result of Lockhart~\cite[Proposition 3.2]{Lockhart}. This allows us to prove our main theorem:

\begin{theorem}\label{prop:newinvariant}
Let $Z$ be a period matrix in $\mathcal{H}_3$, the Siegel upper half-plane of genus $3$, corresponding to a smooth genus 3 hyperelliptic curve $C$ defined over a number field $M$. Let $f$ be a Siegel modular form of weight $k$ such that the invariant $\Phi$ obtained in Corollary~\ref{cor:3.3.2} is integral. Then
\begin{equation*}
j(Z) = \frac{f^{\frac{140}{\gcd(k,140)}}}{\Sigma_{140}^{\frac{k}{\gcd(k,140)}}}(Z)
\end{equation*}
is an algebraic number lying in $M$. 
Moreover, if an odd prime $\mathfrak{p}$ of $\mathcal{O}_{M}$ divides the denominator of this number, then the curve $C$ has geometrically bad reduction modulo $\mathfrak{p}$.
\end{theorem}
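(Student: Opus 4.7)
The plan is to use the Igusa-type homomorphism of Corollary~\ref{cor:3.3.2} to rewrite $j(Z)$ as an explicit rational expression in the coefficients of a binary octic model of $C$, and then to read off both the rationality and the denominator bound from this expression.

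First I would note that both the numerator $f^{140/\gcd(k,140)}$ and the denominator $\Sigma_{140}^{k/\gcd(k,140)}$ are Siegel modular forms of the same weight $\mathrm{lcm}(k,140)$, so $j$ is a weight-zero modular function on $\mathcal{H}_3$. Under the homomorphism of Corollary~\ref{cor:3.3.2}, the form $f$ is sent to the integral invariant $\Phi$ of binary octics (by hypothesis), while the extension of Lockhart's result mentioned in the introduction sends $\Sigma_{140}$ to a fixed power of the discriminant $\Disc$ of a binary octic. If $C$ is given over $M$ by a model $y^2=F(x)$, combining these facts yields an identity of the form
\begin{equation*}
j(Z) \;=\; 2^N \cdot \frac{\Phi(F)^{140/\gcd(k,140)}}{\Disc(F)^{c\cdot k/\gcd(k,140)}},
\end{equation*}
for some integers $N$ and $c$ determined by the normalization between theta-constants and octic coefficients and by the exponent in the Lockhart-type identity. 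Since the right-hand side is a rational expression in the coefficients of $F\in M[x]$, we conclude $j(Z)\in M$, proving the first assertion.

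For the second assertion, suppose $\mathfrak{p}$ is an odd prime of $\mathcal{O}_M$ at which $C$ has geometrically good reduction. Pass to a finite extension $L/M$ and a prime $\mathfrak{P}\mid\mathfrak{p}$ over which one can choose an integral model $y^2=F(x)$ of $C_L$ with $F\in\mathcal{O}_{L,\mathfrak{P}}[x]$ and $\Disc(F)\in\mathcal{O}_{L,\mathfrak{P}}^{\times}$. Since $\Phi$ is assumed to be an integral invariant, $\Phi(F)\in\mathcal{O}_{L,\mathfrak{P}}$, and since $\mathfrak{p}$ is odd the factor $2^N$ is a $\mathfrak{P}$-unit. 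The displayed formula then shows $j(Z)$ is $\mathfrak{P}$-integral; combined with $j(Z)\in M$, this forces $j(Z)$ to be $\mathfrak{p}$-integral. Contrapositively, any odd prime $\mathfrak{p}$ dividing the denominator of $j(Z)$ must be a prime of geometric bad reduction for $C$.

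The main technical obstacle is justifying the displayed identity with sufficient control on the integer $N$: the homomorphism of Corollary~\ref{cor:3.3.2} is defined via theta-constants and so a priori carries extra factors of $\pi$ (which cancel between numerator and denominator in the weight-zero quotient) and of $2$. Showing that the remaining factors are indeed only powers of $2$ is precisely what permits the odd-prime hypothesis, and it is essentially the content of the hyperelliptic genus~3 analogue of \cite[Corollary~3.3.2]{LRZ} obtained in Corollary~\ref{cor:3.3.2} together with the extension of Lockhart's proposition. Once this identity is available, the descent of $\mathfrak{P}$-integrality to $\mathfrak{p}$-integrality via $j(Z)\in M$ and the use of geometric good reduction to realize an integral model after a finite base change are routine.
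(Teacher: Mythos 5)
Your proposal is correct and follows essentially the same route as the paper: rewrite $j(Z)$ via Corollary~\ref{cor:3.3.2} and the Lockhart-type identity (Theorem~\ref{LockhartGen}) as a power of $2$ times $\Phi(F)/\Disc(F)^{15k/\gcd(k,140)}$, with all factors of $\pi$ and $\det\Omega_2$ cancelling in the weight-zero quotient, and then deduce the bad-reduction statement from the integrality of $\Phi$. The only difference is at the final step, where the paper simply cites \cite[Theorem~7.1]{KLLNOS} for the implication ``absolute integral invariant over a power of the discriminant has negative valuation at an odd prime $\Rightarrow$ geometrically bad reduction,'' whereas you sketch that implication directly by producing a unit-discriminant integral model after a finite base change; that is exactly the content of the cited result.
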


Here, $\Sigma_{140}$ is the Siegel modular form of genus $3$ defined by Igusa \cite{igusa67} in terms of the theta constants (see equation~(\ref{eq:Thetas})) as follows:
\begin{equation}\label{eq:sigma140}
\Sigma_{140}(Z) = \sum_{i = 1}^{36} \prod_{j \neq i} \vartheta[\xi_j](0,Z)^8,
\end{equation}
where the $\xi_i$, $i=1,\ldots,36$ are the even theta characteristics we define in Section~\ref{Sec:CM}.

To illustrate this theorem, in Section~\ref{Sec-NewInv} we compute values of several modular invariants whose expressions have a power of $\Sigma_{140}$ in the denominator. For our experiments, we used: genus 3 hyperelliptic CM curves defined over $\Q$, a complete list of which is given in \cite{KS2017}; genus 3 hyperelliptic curves already appearing in some experiments concerning the Chabauty-Coleman method \cite{Chabauty}; and some genus~3 hyperelliptic modular curves \cite{GalbraithThesis,Ogg}.

Note that Theorem~\ref{prop:newinvariant} is an analogue of a result of Goren and Lauter for curves of genus 2 with CM~\cite{GorenLauter07}.
The case of CM hyperelliptic curves is interesting because the bound on the primes dividing the denominators of Igusa invariants proved in~\cite{GorenLauter07} is used to improve the algorithms to construct genus $2$ CM curves.  
We hope that apart from its theoretical interest, our result will allow a similar computation in the case of CM hyperelliptic curves of genus $3$.

\paragraph{Outline.}
This paper is organized as follows. We begin in Section \ref{Sec:CM} with some background on theta functions, the Igusa construction and the Shioda invariants of hyperelliptic curves. Only the most basic facts are given, and references are provided for the reader who would like to delve further.

Then, in Section \ref{Sec:ModularInvs}, we give a correspondence that allows us to relate invariants of octics to Siegel modular forms of degree 3. Using this correspondence, we then show in Section \ref{Sec:MainTheorem} that the primes dividing the denominators of modular invariants that have powers of the Siegel modular form $\Sigma_{140}$ as their denominator are primes of bad reduction, which is our main theorem (Theorem \ref{prop:newinvariant} above). 

Finally, in Section~\ref{Sec-NewInv} we present the list of hyperelliptic curves of genus $3$ for which we computed the values of several modular invariants having powers of $\Sigma_{140}$ as their denominator, when evaluated at a period matrix of their Jacobian. We compared the factorization of the denominators of these values against that of the denominators of the Shioda invariants of these curves and the odd primes of bad reduction of these curves. 

\section*{acknowledgements}
The authors would like to thank the Lorentz Center in Leiden for hosting the Women in Numbers Europe 2 workshop and providing a productive and enjoyable environment for our initial work on this project. We are grateful to the organizers of WIN-E2, Irene Bouw, Rachel Newton and Ekin Ozman, for making this conference and this collaboration possible. We thank Irene Bouw and Christophe Ritzenhaler for helpful discussions.

\section{Hyperelliptic curves of genus $3$ with complex multiplication }\label{Sec:CM}

In this section we introduce notation and  discuss theta functions and theta characteristics, which are crucial to the definition of the Siegel modular invariants we consider in this paper. We briefly recall Igusa's contruction of a homomorphism between the graded ring of Siegel modular forms and the graded ring of invariants of a binary form. Finally, we define the Shioda invariants of genus 3 hyperelliptic curves.

\subsection{Theta functions and theta characteristics} 
\label{Sec: hyp}

In this work, by \emph{period matrix} we will mean a $g \times g$ symmetric matrix $Z$ with positive imaginary part, that is, a matrix in in the Siegel upper half-space of genus $g$. (This is sometimes called a \emph{small} period matrix, but for simplicity and since there is no risk of confusion here we call them period matrices.)

In this case, the relationship between the abelian variety and the period matrix is that the complex points of the abelian variety are exactly the complex points of the torus $\C^g/(\mathbb{Z}^g+Z\mathbb{Z}^g)$. 

We denote by $\mathcal{H}_g$ the Siegel upper half space. 
We now turn our attention to the subject of theta functions. For $\omega \in \C^g$ and $Z \in \mathcal{H}_g$, we define the following important series:
\begin{equation*}
\vartheta(\omega, Z) = \sum_{n \in \Z^{g}}\exp(\pi i n^T Z n + 2 \pi i n^ T \omega),
\end{equation*}
where throughout this article an exponent of $T$ on a vector or a matrix denotes the transpose.

Given a period matrix $Z \in \mathcal{H}_g$, we obtain a set of coordinates on the torus $\C^g/(\mathbb{Z}^g+Z\mathbb{Z}^g)$ in the following way: A vector $x \in [0,1)^{2g}$ corresponds to the point $x_2 +  Z x_1 \in \C^g/(\mathbb{Z}^g+Z\mathbb{Z}^g)$, where $x_1$ denotes the first $g$ entries and $x_2$ denotes the last $g$ entries of the vector $x$ of length $2g$.

For reasons beyond the scope of this short text, it is of interest to consider the value of this theta function as we translate $\omega$ by points that, under the natural quotient map $\C^g \to \C^g/(\mathbb{Z}^g+Z\mathbb{Z}^g)$, map to $2$-torsion points. These points are of the form $\xi_2 + Z \xi_1$ for $\xi \in (1/2)\Z^{2g}$. This motivates the following definition:
\begin{equation}\label{eq:Thetas}
\vartheta[\xi](\omega, Z) = \exp(\pi i \xi_1^T Z \xi_1 + 2 \pi i \xi_1^T (\omega+\xi_2)) \vartheta( \omega+\xi_2 + Z \xi_1, Z),
\end{equation}
which is given in~\cite[page 123]{Mumford1}.
In this context, $\xi$ is customarily called a \emph{characteristic} or \emph{theta characteristic}. The value $\vartheta[\xi](0,Z)$ is called a \emph{theta constant}.

For $\xi \in (1/2)\Z^{2g}$, let 
\begin{equation}\label{eq:estar}
e_*(\xi) = \exp(4\pi i \xi_1^T  \xi_2).
\end{equation}
We say that a characteristic $\xi \in (1/2)\Z^{2g}$ is \emph{even} if $e_*(\xi) = 1$ and \emph{odd} if $e_*(\xi) = -1$. If $\xi$ is even we call $\vartheta[\xi](0,Z)$ an \emph{even theta constant} and if $\xi$ is odd we call $\vartheta[\xi](0,Z)$ an \emph{odd theta constant}.

We have the following fact about the series $\vartheta[\xi](\omega,Z)$ \cite[Chapter II, Proposition 3.14]{Mumford1}: For $\xi \in (1/2)\Z^{2g}$,
\begin{equation*}
\vartheta[\xi](-\omega,Z) = e_*(\xi) \vartheta[\xi](\omega,Z).
\end{equation*}
From this we conclude that all odd theta constants vanish. Furthermore, we have that if $n \in \mathbb{Z}^{2g}$ is a vector with integer entries,
\begin{equation*}
\vartheta[\xi + n](\omega, Z) = \exp(2\pi i \xi_1^T n_2) \vartheta[\xi](\omega, Z).
\end{equation*}
In other words, if $\xi$ is modified by a vector with integer entries, the theta value at worst acquires a factor of~$-1$. Up to this sign, we note that there are in total $2^{g-1}(2^g+1)$ even theta constants and $2^{g-1}(2^g-1)$ odd ones.

We can now finally fully describe the modular form $\Sigma_{140}$ defined in the introduction (equation \eqref{eq:sigma140}). First, we note that when $g = 3$, there are $36$ even theta characteristics. For simplicity of notation, we give an arbitrary ordering to these even theta characteristics, and label them $\xi_1, \ldots, \xi_{36}$. Then we have
\begin{equation*}
\Sigma_{140}(Z) = \sum_{i = 1}^{36} \prod_{j \neq i} \vartheta[\xi_j](0,Z)^8,
\end{equation*}
the $35^{\text{th}}$ elementary symmetric polynomial in the even theta constants.

We will also need another Siegel modular form introduced by Igusa \cite{igusa67} and given by 
\begin{equation}\label{eq:chi18}
\chi_{18}(Z) = \prod_{i=1}^{36} \vartheta[\xi_i](0,Z).
\end{equation} 
Igusa shows that $\Sigma_{140}$ and $\chi_{18}$ are Siegel modular forms for the symplectic group of level 1 $\Sp(6,\mathbb{Z})$.

The significance of these modular forms is the following: in \emph{loc. cit}, Igusa shows that a period matrix $Z$ corresponds to a simple Jacobian of hyperelliptic curve when $\chi_{18}(Z)=0$ and $\Sigma_{140}(Z)\neq 0$ and it is a reducible Jacobian when $\chi_{18}(Z)=\Sigma_{140}(Z)=0$. Moreover, $\chi_{18}$ will appear later as the kernel of Siegel's homomorphism mentioned in the introduction.

\subsection{Igusa's construction}

Let $S(2,2g+2)$ be the graded ring of projective invariants of a binary form of degree $2g+2$. We denote by $\Sp(2g,\mathbb{Z})$ the symplectic group of matrices of dimension $2g$ and by $A(\Sp(2g,\mathbb{Z}))$ the graded ring of modular forms of degree $g$ and level 1. There exists a homomorphism 
\begin{eqnarray*}
\rho \colon A(\Sp(2g,\mathbb{Z}))\rightarrow S(2,2g+2),
\end{eqnarray*}
which was first constructed by Igusa~\cite{igusa67}. Historically, Igusa only showed that the domain of $\rho$ equals $A(\Sp(2g,\mathbb{Z}))$ when $g$ is odd or $g=2,4$, and that for even $g>4$, a sufficient condition for the domain to be the full ring $A(\Sp(2g,\mathbb{Z}))$ is the existence of a modular form of odd weight that does not vanish on the hyperelliptic locus. Such a form was later exhibited by Salvati Manni in~\cite{Salvati}, from which it follows that the domain of $\rho$ is the full ring of Siegel modular forms.

The kernel of $\rho$ is given by modular forms which vanish on all points in $\mathcal{H}_g$ associated with a hyperelliptic curve. In particular, Igusa shows that in genus 3, the kernel of $\rho$ is a principal ideal generated by the form $\chi_{18}$ defined in equation \eqref{eq:chi18}. Furthermore, Igusa shows that this homomorphism $\rho$ is unique, up to a constant. More precisely, any other map is of the form $\zeta_4^{k}\rho$ on the homogenous part $A(\Sp(2g,\mathbb{Z}))_k$, where $\zeta_4$ is a fourth root of unity.
In Section~\ref{Sec:ModularInvs} we display a similar map sending Siegel modular forms to invariants, by going first through the space of geometric Siegel modular forms and then through that of Teichm\"uller forms. As a consequence, our map coincides with the map $\rho$ constructed by Igusa, up to constants. The advantage of our construction is that it allows us to identify a modular form that is in the preimage of a power of the discriminant of the curve under this homomorphism.

\subsection{Shioda invariants}\label{Sec:Shiodas}

We lastly turn our attention to the (integral) invariants under study in this article. We say that polynomials in the coefficients of a binary form corresponding to a hyperelliptic curve that are invariant under the natural action of $\operatorname{SL}_2(\mathbb{C})$ are \emph{invariants of the hyperelliptic curve}, and furthermore that such an invariant is \emph{integral} if the polynomial has integer coefficients. Shioda gave a set of generators for the algebra of invariants of binary octics over the complex numbers~\cite{Shioda}, which are now called \emph{Shioda invariants}. In addition, over the complex numbers, Shioda invariants completely classify isomorphism classes of hyperelliptic curves of genus $3$.
More specifically, the Shioda invariants are $9$ weighted projective invariants $(J_2,J_3,J_4,J_5,J_6,J_7,J_8,J_9,J_{10})$, where $J_i$ has degree $i$, and $J_2, \ldots, J_7$ are algebraically independent, while $J_8,J_9,J_{10}$ depend algebraically on the previous Shioda invariants.

In \cite{LerRit}, the authors showed that these invariants are also generators of the algebra of invariants and determine hyperelliptic curves of genus 3 up to isomorphism in characteristic $p>7$. Later, in his thesis \cite{basson}, Basson provided some extra invariants that together with the classical Shioda invariants classify hyperelliptic curves of genus~3 up to isomorphism in characteristics $3$ and $7$. The characteristic $5$ case is still an unpublished theorem of Basson. 

\section{Invariants of hyperelliptic curves and Siegel modular forms}\label{Sec:ModularInvs}

The aim of this section is to establish an analogue for the hyperelliptic locus of Corollary~3.3.2 in an article of Lachaud, Ritzenthaler and Zykin \cite{LRZ}. Our result, while technically new, does not use any ideas that do not appear in the original paper. We begin by establishing the basic ingredients necessary, using the same notation as in \cite{LRZ} for clarity, and with the understanding that, when omitted, all details may be found in \emph{loc.\ cit.}

Roughly speaking, the main idea of the proof is to compare three different ``flavors'' of modular forms and invariants of non-hyperelliptic curves (which will here be replaced with invariants of hyperelliptic curves). The comparison goes as follows: to connect analytic Siegel modular forms to invariants of curves, the authors first connect analytic Siegel modular forms to geometric modular forms. Following this, geometric modular forms are connected to Teichm\"uller modular forms, via the Torelli map and a result of Ichikawa. Finally Teichm\"uller forms are connected to invariants of curves.

\subsection{From analytic Siegel modular forms to geometric Siegel modular forms}
Let $\mathbf{A}_{g}$ be the moduli stack of principally polarized abelian schemes of relative dimension $g$, and $\pi \colon \mathbf{V}_{g} \to \mathbf{A}_{g}$ be the universal abelian scheme with zero section $\epsilon \colon \mathbf{A}_{g} \to \mathbf{V}_{g}$. Then the relative canonical line bundle over $\mathbf{A}_g$ is given in terms of the rank $g$ bundle of relative regular differential forms of degree one on $\mathbf{V}_g$ over $\mathbf{A}_g$ by the expression
\begin{equation*}
\boldsymbol{\omega} = \bigwedge^g \epsilon^*\Omega^1_{\mathbf{V}_g/\mathbf{A}_g}.
\end{equation*}

With this notation, a \emph{geometric Siegel modular form of genus $g$ and weight $h$}, for $h$ a positive integer, over a field $k$, is an element of the $k$-vector space
\begin{equation*}
\mathbf{S}_{g,h}(k) = \Gamma(\mathbf{A}_g \otimes k, \boldsymbol{\omega}^{\otimes h}).
\end{equation*}
If $f \in \mathbf{S}_{g,h}(k)$ and $A$ is a principally polarized abelian variety of dimension $g$ defined over $k$ equipped with a basis $\alpha$ of the 1-dimensional space $\boldsymbol{\omega}_{k}(A)=\bigwedge^g \Omega^1_k(A)$, we define
\begin{equation*}
f(A, \alpha) = \frac{f(A)}{\alpha^{\otimes h}}.
\end{equation*}
In this way $f(A, \alpha)$ is an algebraic or geometric modular form in the usual sense, i.e.,
\begin{enumerate}
\item $f(A,\lambda \alpha) = \lambda^{-h} f(A,\alpha)$ for any $\lambda \in k^{\times}$, and
\item $f(A,\alpha)$ depends only on the $\bar{k}$-isomorphism class of the pair $(A,\alpha)$.
\end{enumerate}
Conversely, such a rule defines a unique $f \in \mathbf{S}_{g,h}$.

We first compare these geometric Siegel modular forms to the usual analytic Siegel modular forms:

\begin{proposition}[Proposition 2.2.1 of \cite{LRZ}]\label{prop:2.2.1}
Let $\mathbf{R}_{g,h}(\mathbb{C})$ denote the usual space of analytic Siegel modular forms of genus $g$ and weight $h$. Then there is an isomorphism
\begin{equation*}
\mathbf{S}_{g,h}(\mathbb{C}) \to \mathbf{R}_{g,h}(\mathbb{C}),
\end{equation*}
given by sending $f \in \mathbf{S}_{g,h}(\mathbb{C})$ to
\begin{equation*}
\tilde{f}(Z) = \frac{f(A_{Z})}{(2\pi i)^{gh}(dz_1 \wedge \ldots \wedge dz_g)^{\otimes h}},
\end{equation*}
where $A_{Z} = \mathbb{C}^g/(\mathbb{Z}^g + Z \mathbb{Z}^g)$, $Z \in \mathcal{H}_g$ and each $z_i \in \mathbb{C}$.
\end{proposition}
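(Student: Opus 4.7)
The plan is to build the map and its inverse directly and then check bijectivity, exploiting the analytic uniformization of principally polarized abelian varieties of dimension $g$ over $\mathbb{C}$.

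First I would recall that every principally polarized abelian variety $A$ of dimension $g$ over $\mathbb{C}$ is isomorphic (as a ppav) to some $A_Z = \mathbb{C}^g/(\mathbb{Z}^g + Z\mathbb{Z}^g)$ with $Z \in \mathcal{H}_g$, and that two such $Z, Z'$ give isomorphic ppavs if and only if $Z' = \gamma \cdot Z = (aZ+b)(cZ+d)^{-1}$ for some $\gamma = \bigl(\begin{smallmatrix} a & b \\ c & d \end{smallmatrix}\bigr) \in \mathrm{Sp}(2g,\mathbb{Z})$. The flat coordinates $z_1,\dots,z_g$ on $\mathbb{C}^g$ descend to translation-invariant holomorphic $1$-forms $dz_1,\dots,dz_g$ on $A_Z$, giving a distinguished basis $\alpha_Z := (2\pi i)^g\, dz_1 \wedge \cdots \wedge dz_g$ of $\boldsymbol{\omega}(A_Z)$. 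Thus for $f \in \mathbf{S}_{g,h}(\mathbb{C})$, the formula $\tilde{f}(Z) = f(A_Z,\alpha_Z)$ defines a function on $\mathcal{H}_g$; holomorphicity follows from the fact that the family $Z \mapsto A_Z$ is an analytic family of ppavs to which $f$, as a section of $\boldsymbol{\omega}^{\otimes h}$, pulls back holomorphically.

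Next I would check the modular transformation law. For $\gamma \in \mathrm{Sp}(2g,\mathbb{Z})$, the standard isomorphism $\varphi_\gamma : A_Z \xrightarrow{\sim} A_{\gamma \cdot Z}$ is induced by the linear map $\mathbb{C}^g \to \mathbb{C}^g$, $v \mapsto {}^t(cZ+d)^{-1} v$; a direct computation shows $\varphi_\gamma^*(dz'_1 \wedge \cdots \wedge dz'_g) = \det(cZ+d)^{-1}\, dz_1 \wedge \cdots \wedge dz_g$. Combined with axiom (i) of geometric modular forms ($f(A,\lambda\alpha) = \lambda^{-h} f(A,\alpha)$) and the invariance of $f(A,\alpha)$ under isomorphisms of the pair (axiom (ii)), this yields
\begin{equation*}
\tilde{f}(\gamma \cdot Z) = f(A_{\gamma \cdot Z}, \alpha_{\gamma \cdot Z}) = f(A_Z, \det(cZ+d)^{-1} \alpha_Z) = \det(cZ+d)^{h}\,\tilde{f}(Z),
\end{equation*}
which is precisely the weight-$h$ transformation law. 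Holomorphicity at the boundary (the Koecher principle, automatic for $g \geq 2$) completes the verification that $\tilde{f} \in \mathbf{R}_{g,h}(\mathbb{C})$.

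For the inverse, given $F \in \mathbf{R}_{g,h}(\mathbb{C})$, I would define a rule $f$ on pairs $(A,\alpha)$ as follows: choose any analytic isomorphism $A \cong A_Z$, transport $\alpha$ to a basis $\mu \cdot \alpha_Z$ of $\boldsymbol{\omega}(A_Z)$ for some $\mu \in \mathbb{C}^\times$, and set $f(A,\alpha) = \mu^{-h} F(Z)\cdot \alpha^{\otimes h}$, i.e., $f(A,\alpha)/\alpha^{\otimes h} = \mu^{-h} F(Z)$. The modular transformation law of $F$ together with the scaling of $\alpha_Z$ under $\varphi_\gamma$ shows that this rule is independent of the chosen uniformization, and axioms (i) and (ii) are built in by construction, so $f \in \mathbf{S}_{g,h}(\mathbb{C})$. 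The two assignments are evidently mutually inverse.

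The only real obstacle is bookkeeping the scaling factor $\det(cZ+d)$ and the $(2\pi i)^g$ normalization consistently; once the pullback formula $\varphi_\gamma^*(dz') = {}^t(cZ+d)^{-1}\, dz$ is established, everything else is formal. The constant $(2\pi i)^{gh}$ appearing in the statement is exactly what is needed so that the identification matches the integral structure on $H^0(A_Z,\Omega^1)$ coming from the lattice $\mathbb{Z}^g + Z\mathbb{Z}^g$, which in turn ensures the isomorphism descends nicely to arithmetic subrings — a point that will matter in Section \ref{Sec:ModularInvs}, but is not needed for the bare statement over $\mathbb{C}$.
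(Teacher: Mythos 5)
This proposition is not proved in the paper at all: it is imported verbatim as Proposition 2.2.1 of \cite{LRZ}, so there is no internal proof to compare against. Your argument is the standard one and is essentially sound: uniformize every principally polarized abelian variety as $A_Z$, take the distinguished basis $\alpha_Z = (2\pi i)^g\, dz_1\wedge\cdots\wedge dz_g$ so that $\tilde f(Z) = f(A_Z,\alpha_Z)$, and derive the weight-$h$ transformation law from the pullback factor $\det(cZ+d)^{-1}$ on the top wedge of invariant differentials together with properties (1) and (2) of geometric modular forms; the Koecher principle disposes of the boundary condition for $g\ge 2$ (which suffices here, since the paper only uses $g=3$).

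The one place where your proof is thinner than it looks is the surjectivity step. From an analytic $F\in\mathbf{R}_{g,h}(\mathbb{C})$ your construction produces a \emph{rule} on complex pairs $(A,\alpha)$ satisfying the two axioms, equivalently a holomorphic section of $\boldsymbol{\omega}^{\otimes h}$ on the analytic quotient $\mathcal{H}_g/\Sp(2g,\mathbb{Z})$; but $\mathbf{S}_{g,h}(\mathbb{C})=\Gamma(\mathbf{A}_g\otimes\mathbb{C},\boldsymbol{\omega}^{\otimes h})$ is a space of \emph{algebraic} sections on the moduli stack, and passing from the analytic section to an algebraic one requires an algebraization step (extension over a compactification via Koecher, then GAGA), which you do not address. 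The paper sidesteps this by asserting, just before the proposition, that such a rule defines a unique $f\in\mathbf{S}_{g,h}$, so at the paper's level of rigor your argument is acceptable; but if you want a self-contained proof you should either cite that correspondence explicitly or supply the GAGA argument. The rest --- the $\det(cZ+d)$ bookkeeping, the role of the $(2\pi i)^{gh}$ normalization, and the mutual inverseness of the two assignments --- is correct.
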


Furthermore, this isomorphism has the following pleasant property:

\begin{proposition}[Proposition 2.4.4 of \cite{LRZ}]\label{prop:2.4.4}
Let $(A,a)$ be a principally polarized abelian variety of dimension $g$ defined over $\mathbb{C}$, let $\omega_1, \ldots, \omega_g$ be a basis of $\Omega^1_{\mathbb{C}}(A)$ and let $\omega = \omega_1 \wedge \ldots \wedge \omega_g \in \boldsymbol{\omega}_{\mathbb{C}}(A)$. If $\Omega = \left(\begin{smallmatrix} \Omega_1 & \Omega_2 \end{smallmatrix} \right)$ is a Riemann matrix obtained by integrating the forms $\omega_i$ against a basis of $H_1(A,\mathbb{Z})$ for the polarization $a$, then $Z = \Omega_2^{-1}\Omega_1$ is in $\mathcal{H}_g$ and
\begin{equation*}
f(A, \omega) = (2 \pi i)^{gh} \frac{\tilde{f}(Z)}{\det \Omega_2^h}.
\end{equation*}
\end{proposition}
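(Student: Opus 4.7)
The plan is to reduce the statement to Proposition~2.2.1 by exhibiting an explicit isomorphism $\phi \colon A \to A_Z$ and then exploiting the two defining properties of a geometric modular form: invariance under isomorphism of the pair and homogeneity of weight $h$ in the differential.

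First I would identify $A$ analytically as $\mathbb{C}^g/\Lambda$ with $\Lambda = \Omega_1\mathbb{Z}^g + \Omega_2\mathbb{Z}^g$, choosing coordinates $u_1,\ldots,u_g$ on the universal cover so that $\omega_i = du_i$; this is legitimate because calling $\Omega$ a Riemann matrix of the $\omega_i$ means precisely that the $\omega_i$ are represented by the coordinate differentials under $A \cong \mathbb{C}^g/\Lambda$. Multiplication by $\Omega_2^{-1}$ sends $\Lambda$ to $\mathbb{Z}^g + Z\mathbb{Z}^g$ with $Z = \Omega_2^{-1}\Omega_1$, and the standard fact that such a normalized matrix lies in $\mathcal{H}_g$ when the underlying symplectic basis of $H_1(A,\mathbb{Z})$ is adapted to $a$ gives the first assertion. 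Writing $z_j$ for the coordinates on the target and using $z = \Omega_2^{-1}u$, the induced isomorphism $\phi \colon A \to A_Z$ satisfies
\begin{equation*}
\phi^*(dz_j) = \sum_{k=1}^{g} (\Omega_2^{-1})_{jk}\, du_k,
\qquad
\phi^*(dz_1 \wedge \cdots \wedge dz_g) = (\det \Omega_2)^{-1}\,\omega.
\end{equation*}

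Next I would combine this with the two defining properties of $f \in \mathbf{S}_{g,h}(\mathbb{C})$. Isomorphism-invariance yields $f(A, \phi^*\alpha') = f(A_Z, \alpha')$ for any basis $\alpha'$ of $\boldsymbol{\omega}_{\mathbb{C}}(A_Z)$; specializing to $\alpha' = dz_1 \wedge \cdots \wedge dz_g$ and applying homogeneity $f(A,\lambda\alpha) = \lambda^{-h} f(A,\alpha)$ with $\lambda = (\det\Omega_2)^{-1}$ gives
\begin{equation*}
f(A_Z, dz_1 \wedge \cdots \wedge dz_g)
= f\bigl(A,\, (\det \Omega_2)^{-1}\, \omega\bigr)
= (\det \Omega_2)^{h}\, f(A, \omega).
\end{equation*}
Proposition~2.2.1, after unwinding $f(A,\alpha) = f(A)/\alpha^{\otimes h}$ in its displayed formula, reads $f(A_Z, dz_1 \wedge \cdots \wedge dz_g) = (2\pi i)^{gh}\,\tilde{f}(Z)$. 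Equating the two expressions for $f(A_Z, dz_1 \wedge \cdots \wedge dz_g)$ and dividing by $(\det \Omega_2)^{h}$ gives the claimed identity.

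The main obstacle is bookkeeping rather than any conceptual step: keeping the direction of $\phi^*$ straight, distinguishing the scalar $\det\Omega_2$ from the matrix $\Omega_2$, and correctly reading $\lambda^{-h}$ when $\lambda$ is itself a negative power of a determinant. A subtlety worth verifying is independence from the choice of symplectic basis of $H_1(A,\mathbb{Z})$ adapted to $a$: such a change alters $\Omega$ by a symplectic matrix on the right, and the resulting consistency on the right-hand side is equivalent to the classical transformation law of $\tilde{f}$ under $\Sp(2g,\mathbb{Z})$, which is precisely where the hypothesis that the basis be adapted to the polarization enters.
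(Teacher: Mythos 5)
Your argument is correct, and the paper itself offers no proof of this statement --- it is imported verbatim from Proposition~2.4.4 of \cite{LRZ} --- so the only available comparison is with the cited source, whose proof is exactly this normalization argument: pass to $A_Z$ via multiplication by $\Omega_2^{-1}$, pull back $dz_1\wedge\cdots\wedge dz_g$ to $(\det\Omega_2)^{-1}\omega$, and combine isomorphism-invariance and weight-$(-h)$ homogeneity with Proposition~\ref{prop:2.2.1}. Your bookkeeping (in particular the sign of the exponent in $\lambda^{-h}$ with $\lambda=(\det\Omega_2)^{-1}$) checks out.
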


\subsection{From geometric Siegel modular forms to Teichm\"uller modular forms}
We now turn our attention to so-called Teichm\"uller modular forms, which were studied by Ichikawa \cite{ichikawa1}\cite{ichikawa2}\cite{ichikawa3}\cite{ichikawa4}. Let $\mathbf{M}_g$ be the moduli stack of curves of genus $g$, let $\pi \colon \mathbf{C}_g \to M_g$ be the universal curve, and let
\begin{equation*}
\boldsymbol{\lambda} = \bigwedge^g \pi_* \Omega^1_{\mathbf{C}_g/\mathbf{M}_g}
\end{equation*}
be the invertible sheaf associated to the Hodge bundle. 

With this notation, a \emph{Teichm\"uller modular form of genus $g$ and weight $h$}, for $h$ a positive integer, over a field $k$, is an element of the $k$-vector space
\begin{equation*}
\mathbf{T}_{g,h}(k) = \Gamma(\mathbf{M}_g \otimes k,\boldsymbol{\lambda}^{\otimes h}).
\end{equation*}
As before, if $f \in \mathbf{T}_{g,h}(k)$ and $C$ is a curve of genus $g$ defined over $k$ equipped with a basis $\lambda$ of $\boldsymbol{\lambda}_{k}(C)=\bigwedge^g \Omega^1_k(C)$, we define
\begin{equation*}
f(C, \lambda) = \frac{f(C)}{\lambda^{\otimes h}}.
\end{equation*}
Again, $f(C, \lambda)$ is an algebraic modular form in the usual sense.
Ichikawa proves:
\begin{proposition}[Proposition 2.3.1 of \cite{LRZ}]\label{prop:2.3.1}
The Torelli map $\theta \colon \mathbf{M}_g \to \mathbf{A}_g$, associating to a curve $C$ its Jacobian $\Jac C$ with the canonical polarization $j$, satisfies $\theta^* \boldsymbol{\omega} = \boldsymbol{\lambda}$, and induces for any field a linear map
\begin{equation*}
\theta^* \colon \mathbf{S}_{g,h}(k) \to \mathbf{T}_{g,h}(k)
\end{equation*}
such that $(\theta^*f)(C) = \theta^*(f(\Jac C)).$ In other words, for a basis $\lambda$ of $\boldsymbol{\lambda}_{k}(C)$ and fixing $\alpha$ such that 
 a basis $\alpha$ of $\boldsymbol{\omega}_k (C)$ whose pullback to $C$ equals $\lambda$,
\begin{equation*}
f(\Jac C, \alpha) = (\theta^* f)(C,\lambda).
\end{equation*}
\end{proposition}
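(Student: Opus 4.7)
The strategy is to establish the sheaf isomorphism $\theta^*\boldsymbol{\omega} \cong \boldsymbol{\lambda}$ on $\mathbf{M}_g$; once this is in hand, the linear map $\theta^*$ on spaces of sections and the evaluation formula follow by taking $h$-th tensor powers and unwinding the definitions given just above the proposition.

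The key classical input is the identification of global $1$-forms on a curve with invariant $1$-forms on its Jacobian. For a smooth projective genus-$g$ curve $C$ over $k$ with Jacobian $J = \Jac C$, the Abel--Jacobi map $C \hookrightarrow J$ pulls back translation-invariant differentials on $J$ isomorphically onto $H^0(C, \Omega^1_C)$. Since every global $1$-form on an abelian variety is translation invariant, this identifies the cotangent space at the identity $\epsilon^*\Omega^1_{J}$ with $H^0(C,\Omega^1_C)$. Taking $g$-th exterior powers gives the fibrewise identification $(\theta^*\boldsymbol{\omega})(C) \cong \boldsymbol{\lambda}(C)$.

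To promote this to an isomorphism of invertible sheaves on $\mathbf{M}_g$, I would invoke the universal Picard-scheme picture: writing the relative Jacobian of the universal curve $\pi \colon \mathbf{C}_g \to \mathbf{M}_g$ as $\operatorname{Pic}^0(\mathbf{C}_g/\mathbf{M}_g)$ with zero section $\epsilon$, the standard theory of Picard schemes supplies a canonical isomorphism
\begin{equation*}
\epsilon^*\Omega^1_{\operatorname{Pic}^0(\mathbf{C}_g/\mathbf{M}_g)/\mathbf{M}_g} \;\cong\; \pi_* \Omega^1_{\mathbf{C}_g/\mathbf{M}_g}.
\end{equation*}
Pulling back along the Torelli morphism and taking top exterior powers yields $\theta^*\boldsymbol{\omega} \cong \boldsymbol{\lambda}$, and hence an induced linear map $\theta^* : \mathbf{S}_{g,h}(k) \to \mathbf{T}_{g,h}(k)$ on global sections of the $h$-th tensor powers. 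For the evaluation statement, given a basis $\lambda$ of $\boldsymbol{\lambda}_k(C)$, let $\alpha$ be the basis of $\boldsymbol{\omega}_k(\Jac C)$ corresponding to $\lambda$ under this canonical identification. Then, chasing the definitions from the preceding subsection,
\begin{equation*}
(\theta^*f)(C,\lambda) = \frac{(\theta^*f)(C)}{\lambda^{\otimes h}} = \frac{f(\Jac C)}{\alpha^{\otimes h}} = f(\Jac C,\alpha),
\end{equation*}
the middle equality being exactly the compatibility of $\theta^*$ with the sheaf identification.

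The main obstacle is the careful bookkeeping on the moduli stacks: one must check that the Picard-scheme isomorphism described above is genuinely canonical and functorial in the family, so that the induced map on global sections is well-defined independent of any local trivializations, and one must handle the stacky subtleties (for instance, the automorphism $[-1]$ acting on every Jacobian, or the extra hyperelliptic involution in low genus). These are precisely the issues addressed in Ichikawa's work on Teichm\"uller modular forms, on which the formulation in the cited result of Lachaud, Ritzenthaler and Zykin ultimately rests.
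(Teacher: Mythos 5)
Your argument is correct and is essentially the standard one: the paper itself gives no proof of this proposition, quoting it directly from Lachaud--Ritzenthaler--Zykin (who in turn rely on Ichikawa), and the route you take --- identifying $\epsilon^*\Omega^1_{\Jac C}$ with $H^0(C,\Omega^1_C)$ via invariant differentials and relative Picard-scheme/Serre-duality theory, then taking top exterior powers and $h$-th tensor powers and unwinding the evaluation definitions --- is precisely the argument underlying the cited result. Your closing caveat about canonicity in families and the stacky subtleties (the $[-1]$ automorphism, the hyperelliptic involution) correctly locates the only genuinely delicate points, which are indeed what Ichikawa's work handles.
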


\subsection{From Teichm\"uller modular forms to invariants of binary forms}
We finally connect the Teichm\"uller modular forms to invariants of hyperelliptic curves. To this end, let $E$ be a vector space of dimension $2$ over a field $k$ of characteristic different from $2$, and put $G = \GL(E)$ and $\mathbf{X}_d = \Sym^d(E^*)$, the space of homogeneous polynomials of degree $d$ on $E$. We define the action of $G$ on $\mathbf{X}_d$, $u \cdot F$ for $F \in \mathbf{X}_d$, by
\begin{equation*}
(u \cdot F)(x,z) = F(u^{-1}(x,z)).
\end{equation*}
(By a slight abuse of notation we denote an element of $E$ by the pair $(x,z)$, effectively prescribing a basis. Our reason to do so will become clear later.)

We say that $\Phi$ is an \emph{invariant of degree $h$} if $\Phi$ is a regular function on $\mathbf{X}_d$, homogeneous of degree $h$ (by which we mean that $\Phi(\lambda F) = \lambda^h \Phi(F)$ for $\lambda \in k^{\times}$ and $F \in \mathbf{X}_d$) and
\begin{equation*}
u \cdot \Phi = \Phi \quad \text{for every} \quad u \in \SL(E),
\end{equation*}
where the action $u \cdot \Phi$ is given by 
\begin{equation*}
(u \cdot \Phi)(F) = \Phi( u^{-1} \cdot F).
\end{equation*}
We note the space of invariants of degree $h$ by $\Inv_h(\mathbf{X}_d)$. Note that in what follows we will define an open set of $\mathbf{X}_d^0$, and be interested in the invariants of degree $h$ that are regular on that open set. The definition of invariance is the same, all that changes is the set on which the function is required to be regular.

From now on we require $d\geq 6$ to be even, and put $g = \frac{d-2}{2}$, then the \emph{universal hyperelliptic curve} over the the affine space $\mathbf{X}_d = \Sym^d(E)$ is the variety
\begin{equation*}
\mathbf{Y}_d = \left\{(F,(x,y,z)) \in \mathbf{X}_d \times \mathbb{P}\left(1,\frac{d}{2},1\right) : y^2 = F(x,z) \right\},
\end{equation*}
where $\mathbb{P}(1,g+1,1)$ is the weighted projective plane with $x$ and $z$ having weight $1$ and $y$ having weight $g+1$. The non-singular locus of $\mathbf{X}_d$ is the open set
\begin{equation*}
\mathbf{X}_d^0 = \{ F \in \mathbf{X}_d : \Disc(F) \neq 0\}.
\end{equation*}
We denote by $\mathbf{Y}_d^0$ the restriction of $\mathbf{Y}_d$ to the nonsingular locus. The projection gives a smooth surjective $k$-morphism 
\begin{equation*}
\pi \colon \mathbf{Y}_d^0 \to \mathbf{X}_d^0
\end{equation*}
and its fiber over $F$ is the nonsingular hyperelliptic curve $C_F : y^2 = F(x,z)$ of genus $g$. In this case we have en explicit $k$-basis for the space of holomorphic differentials of $C_F$, denoted $\Omega^1(C_F)$, given by
\begin{equation}\label{eq:basiseta}
\omega_1 = \frac{dx}{y}, \omega_2 = \frac{xdx}{y}, \ldots, \omega_g = \frac{x^{g-1}dx}{y}.
\end{equation}

Now let $u \in G$ act on $\mathbf{Y}_d$ by
\begin{equation*}
u \cdot (F, (x,y,z)) = (u \cdot F, u \cdot (x,y,z)),
\end{equation*}
where the action on $F$ is given by
\begin{equation*}
(u \cdot F) (x,z) = F(u^{-1}(x,z))
\end{equation*}
and the action of $u$ on $(x,y,z)$ is given by replacing the vector $(x,z)$ by $u(x,z)$ and leaving $y$ invariant. Then the projection 
\begin{equation*}
\pi \colon \mathbf{Y}_d^0 \to \mathbf{X}_d^0
\end{equation*}
is $G$-equivariant.

Then as in \cite{LRZ}, the section
\begin{equation*}
\omega = \omega_1 \wedge \ldots \wedge \omega_g
\end{equation*}
is a basis of the one-dimensional space $\Gamma(\mathbf{X}^0_d, \boldsymbol{\alpha})$, where 
\begin{equation*}
\boldsymbol{\alpha} = \bigwedge^g \pi_* \Omega^1_{\mathbf{Y}_d^0/\mathbf{X}_d^0},
\end{equation*}
the Hodge bundle of the universal curve over $\mathbf{X}_d^0$. For every $F \in \mathbf{X}_d^0$, an element $u \in G$ induces an isomorphism
\begin{equation*}
\phi_u \colon C_F \to C_{u\cdot F},
\end{equation*}
and this defines a linear automorphism $\phi^*_u$ of $\boldsymbol{\alpha}$.

For any $h \in \mathbb{Z}$, we define $\Gamma(\mathbf{X}_d^0, \boldsymbol{\alpha}^{\otimes h})^G$ the subspace of sections $s \in \Gamma(\mathbf{X}_d^0, \boldsymbol{\alpha}^{\otimes h})$ such that 
\begin{equation*}
\phi_u^*(s) = s
\end{equation*}
for every $u \in G$. Then if $\alpha \in \Gamma(\mathbf{X}_d^0, \boldsymbol{\alpha})$ and $F \in \mathbf{X}_d^0$, we define
\begin{equation*}
s(F,\alpha) = \frac{s(F)}{\alpha^{\otimes h}}.
\end{equation*}
This gives us the space that will be related to invariants of hyperelliptic curves, which we now define.

In this setting we have the exact analogue of Proposition 3.2.1 of \cite{LRZ}:
\begin{proposition}\label{prop:3.2.1}
The section $\omega \in \Gamma(X_d^0,\boldsymbol{\alpha})$ satisfies the following properties:
\begin{enumerate}
\item If $u \in G$, then
\begin{equation*}
\phi_u^* \omega = \det(u)^{w_0} \omega,
\end{equation*}
with
\begin{equation*}
w_0 = \frac{dg}{4}.
\end{equation*}
\item Let $h \geq 0$ be an integer. The linear map
\begin{align*}
\tau \colon \Inv_{\frac{gh}{2}}(\mathbf{X}_d^0) &\to \Gamma(\mathbf{X}^0_d, \boldsymbol{\alpha}^{\otimes h})^G \\
\Phi &\mapsto \Phi \cdot \omega^{\otimes h}
\end{align*}
is an isomorphism.
\end{enumerate}
\end{proposition}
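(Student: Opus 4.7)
The plan is to follow the blueprint of Proposition~3.2.1 of \cite{LRZ}, adapting from the ternary form setting ($\dim E = 3$) to our binary form setting ($\dim E = 2$). For part (1), I would work in the affine chart $z = 1$, where $\phi_u$ (for $u \in G$ with entries $a, b, c, d$) acts as $(x,y) \mapsto \bigl((ax+b)/(cx+d),\; y/(cx+d)^{g+1}\bigr)$; the weight-$(g+1)$ rescaling of $y$ is forced by the weighted projective embedding $\mathbb{P}(1,g+1,1)$. Pulling back each $\omega_i = x^{i-1}\,dx/y$ via this map and using the identity
\[
d\!\left(\frac{ax+b}{cx+d}\right) = \frac{\det(u)}{(cx+d)^2}\,dx
\]
collects the rational factors into $\phi_u^* \omega_i = \det(u)\,(ax+b)^{i-1}(cx+d)^{g-i}\,dx/y$, which expresses the transformation of the ordered basis $(\omega_1, \ldots, \omega_g)$ as $\det(u)$ times the action of $u$ on $\Sym^{g-1}\langle x,z\rangle$ written in the basis $\{x^{j-1}\}_{j=1}^g$.

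Wedging then picks up one factor of $\det(u)$ from each $\omega_i$ and one factor $\det\bigl(\Sym^{g-1}(u)\bigr)= \det(u)^{g(g-1)/2}$, using the standard identity $\det(\Sym^n A) = (\det A)^{n(n+1)/2}$ for $A \in \GL_2$ with $n = g-1$. Hence $\phi_u^*\omega = \det(u)^{g(g+1)/2}\,\omega$, and since $d = 2g+2$ the exponent equals $dg/4 = w_0$, completing part (1).

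For part (2), the analysis of well-definedness is the heart of the statement. By part (1), $G$-invariance of $s = \Phi\cdot\omega^{\otimes h}$ is equivalent to the functional equation $\Phi(u\cdot F) = \det(u)^{-w_0 h}\,\Phi(F)$. Testing this on the two generating subgroups of $G = \GL(E)$, namely $\SL(E)$ and the center $\{\lambda\cdot\mathrm{id}\}$, recovers respectively the $\SL(E)$-invariance of $\Phi$ and, via $u\cdot F = \lambda^{-d}F$, the homogeneity condition in degree $2 w_0 h/d = gh/2$. Injectivity is immediate because $\omega^{\otimes h}$ is a nowhere-vanishing generator of $\boldsymbol{\alpha}^{\otimes h}$ on $\mathbf{X}_d^0$, and surjectivity follows by writing an arbitrary invariant section uniquely as $\Phi\cdot\omega^{\otimes h}$ with $\Phi$ regular on $\mathbf{X}_d^0$ and running the previous analysis in reverse. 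The main obstacle will be the explicit differential computation in part (1): organizing the $(cx+d)$-factors from the fractional linear substitution alongside the $\Sym^{g-1}$-determinant requires some bookkeeping, but once arranged the exponent $g(g+1)/2$ drops out cleanly and matches $w_0 = dg/4$.
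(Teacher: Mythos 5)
Your proposal is correct, and for part (1) it takes a genuinely different route from the paper. The paper follows Lachaud--Ritzenthaler--Zykin's abstract argument: one first shows that the cocycle $c(u,F)$ defined by $\phi_u^*\omega = c(u,F)\,\omega$ is independent of $F$ and defines a character of $G=\GL(E)$, hence equals $\det(u)^{w_0}$ for some integer $w_0$; the exponent is then pinned down by evaluating only on scalar matrices $u=\lambda I_2$, where the single computation $\omega_i(\lambda^{-d}F)/\omega_i(F)=\lambda^{d/2}$ gives $w_0=dg/4$. You instead compute $\phi_u^*\omega_i$ explicitly for a general $u$ via the fractional linear substitution, identify the matrix of the action on the basis $\{x^{i-1}dx/y\}$ as $\det(u)$ times $\Sym^{g-1}(u)$, and conclude with the identity $\det(\Sym^{g-1}u)=(\det u)^{g(g-1)/2}$; the resulting exponent $g+g(g-1)/2=g(g+1)/2=dg/4$ agrees. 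Your computation buys more (the full matrix of the action, not just its determinant) and avoids having to argue that $c(u,F)$ is independent of $F$ and is a character, at the cost of the symmetric-power determinant identity and some bookkeeping with the $(cx+d)$-factors; the paper's route is shorter because the scalar-matrix evaluation suffices once the character claim is granted. For part (2) both you and the paper defer to essentially the standard argument; the one point worth flagging in your version is that $\SL(E)$ together with the scalars generates $\GL(E)$ only over an algebraically closed field, so the equivalence between the functional equation $\Phi(u\cdot F)=\det(u)^{-w_0h}\Phi(F)$ for all $u\in G$ and the pair consisting of $\SL(E)$-invariance and homogeneity of degree $gh/2$ should be checked after base change to $\bar{k}$, which is harmless since both sides are polynomial identities in the entries of $u$; this step is equally implicit in the original, so it is not a gap relative to the paper's own level of detail.
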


\begin{proof}
The proof of the first part goes exactly as in the original: For $u \in G$, we have that
\begin{equation*}
(\phi_u^* \omega) (F, \omega) = c(u,F) \omega(F,\omega),
\end{equation*}
and we can conclude, via the argument given in \cite{LRZ}, that $c(u,F)$ is independent of $F$ and a character $\chi$ of $G$, and that in fact
\begin{equation*}
c(u,F) = \chi(u) = \det u ^{w_0}
\end{equation*}
for some integer $w_0$. To compute $w_0$ we again follow the original and set $u = \lambda I_2$ with $\lambda \in k^{\times}$ to obtain
\begin{equation*}
\frac{\omega_i(\lambda^{-d}F)}{\omega_i(F)} = \frac{x^{i-1}dx}{\sqrt{\lambda^{-d}F(x,y)}}  \div \frac{x^{i-1}dx}{\sqrt{F(x,y)}} = \lambda^{d/2},
\end{equation*}
since $y = \sqrt{F(x,y)}$, for each $i = 1, \ldots, g$. Hence
\begin{equation*}
(\phi_u^* \omega) (F, \omega) = \lambda^{dg/2} = \det(u)^{w_0}
\end{equation*}
and since $\det(u) = \lambda^2$ we have
\begin{equation*}
{w_0} = \frac{dg}{4} = \frac{d(d-2)}{8}.
\end{equation*}

The proof of the second part also goes exactly as in the original, with the replacement of a denominator of $4$ instead of $3$ in the quantity that is denoted $w$ in \cite{LRZ}.
\end{proof}

\subsection{Final step}
With this in hand, we immediately obtain the analogue of Proposition 3.3.1 of \cite{LRZ}. We begin by setting up the notation we will need. We continue to have $d \geq 6$ an even integer and $g = \frac{d-2}{2}$. Because the fibers of $\pi \colon \mathbf{Y}_d^0 \to \mathbf{X}_d^0$ are smooth hyperelliptic curves of genus $g$, by the universal property of $\mathbf{M}_g$, we get a morphism
\begin{equation*}
p \colon \mathbf{X}_g^0 \to \mathbf{M}_g^{hyp},
\end{equation*}
where this time $\mathbf{M}_g^{hyp}$ is the hyperelliptic locus of the moduli stack $\mathbf{M}_g$ of curves of genus $g$. By construction we have $p^* \boldsymbol{\lambda} = \boldsymbol{\alpha}$, and therefore we obtain a morphism
\begin{equation*}
p^* \colon \Gamma(\mathbf{M}_g^{hyp}, \boldsymbol{\lambda}^{\otimes h}) \to \Gamma(\mathbf{X}_d^0, \boldsymbol{\alpha}^{\otimes h}).
\end{equation*}
As in \cite{LRZ}, by the universal property of $\mathbf{M}_g^{hyp}$, we have
\begin{equation*}
\phi_u^* \circ p^*(s) = p^*(s)
\end{equation*}
for $s \in \Gamma(\mathbf{M}_g^{hyp}, \boldsymbol{\lambda}^{\otimes h})$. From this we conclude that $p^*(s) \in \Gamma(X_d^0,\boldsymbol{\alpha})^G$, and combining this with the second part of Proposition \ref{prop:3.2.1}, which establishes the isomorphism of $\Gamma(X_d^0,\boldsymbol{\alpha})^G$ and $\Inv_{gh}(X_d^0)$, we obtain:

\begin{proposition}\label{prop:3.3.1}
For any even $h \geq 0$, the linear map given by $\sigma = \tau^{-1} \circ p^*$ is a homomorphism
\begin{equation*}
\sigma \colon \Gamma(\mathbf{M}_g^{hyp}, \boldsymbol{\lambda}^{\otimes h}) \to \Inv_{\frac{gh}{2}}(X_d^0)
\end{equation*}
satisfying
\begin{equation*}
\sigma(f)(F) = f(C_F, (p^*)^{-1}\omega)
\end{equation*}
for any $F \in \mathbf{X}_d^0$ and any section $f \in \Gamma(\mathbf{M}_g^{hyp}, \boldsymbol{\lambda}^{\otimes h})$.
\end{proposition}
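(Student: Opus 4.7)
The plan is to exhibit $\sigma = \tau^{-1} \circ p^*$ as a well-defined linear map by verifying (i) that $p^*$ lands in the $G$-invariant subspace of $\Gamma(\mathbf{X}_d^0, \boldsymbol{\alpha}^{\otimes h})$, so that $\tau^{-1}$ may be applied, and (ii) that the resulting evaluation formula drops out of unwinding definitions. Both steps follow the template of Proposition~3.3.1 in \cite{LRZ} essentially verbatim, with the hyperelliptic locus $\mathbf{M}_g^{hyp}$ playing the role of the non-hyperelliptic locus in \emph{loc.\ cit.}

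For step (i), I would argue as follows. Since $p^* \boldsymbol{\lambda} = \boldsymbol{\alpha}$, pullback along $p$ gives a linear map
\begin{equation*}
p^* \colon \Gamma(\mathbf{M}_g^{hyp}, \boldsymbol{\lambda}^{\otimes h}) \to \Gamma(\mathbf{X}_d^0, \boldsymbol{\alpha}^{\otimes h}).
\end{equation*}
The crucial $G$-equivariance comes from the universal property of $\mathbf{M}_g^{hyp}$: for any $u \in G$ the two morphisms $p$ and $p \circ \phi_u$ from $\mathbf{X}_d^0$ to $\mathbf{M}_g^{hyp}$ classify the same family of smooth hyperelliptic curves up to isomorphism (the isomorphism $\phi_u \colon C_F \to C_{u \cdot F}$ identifying fibers), so the two pullbacks coincide. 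This is exactly the identity $\phi_u^* \circ p^* = p^*$ already recorded in the text preceding the statement, and it places $p^*(f)$ inside $\Gamma(\mathbf{X}_d^0, \boldsymbol{\alpha}^{\otimes h})^G$. Proposition~\ref{prop:3.2.1}(2) then applies, noting that $h$ even ensures $gh/2$ is a nonnegative integer, and produces a unique invariant $\sigma(f) := \tau^{-1}(p^*f) \in \Inv_{gh/2}(\mathbf{X}_d^0)$. Linearity of $\sigma$ is inherited from that of $p^*$ and $\tau^{-1}$.

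For step (ii), the evaluation formula is immediate from unwinding $\tau$. By construction $\tau(\sigma(f)) = p^*(f)$, that is, $\sigma(f) \cdot \omega^{\otimes h} = p^*(f)$ as sections of $\boldsymbol{\alpha}^{\otimes h}$, so evaluating at $F \in \mathbf{X}_d^0$ gives $\sigma(f)(F) = (p^* f)(F)/\omega(F)^{\otimes h}$. Under the fiberwise identification $\boldsymbol{\alpha}(F) \cong \boldsymbol{\lambda}(C_F)$ furnished by $p^*\boldsymbol{\lambda} = \boldsymbol{\alpha}$, the basis $\omega(F)$ corresponds to a basis $(p^*)^{-1}\omega$ of $\boldsymbol{\lambda}(C_F)$ and $(p^*f)(F) = f(C_F)$, so by the definition of the value of a Teichmüller form on a pair $(C,\lambda)$ the right-hand side is precisely $f(C_F, (p^*)^{-1}\omega)$. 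The only point that requires any care is the $G$-equivariance established in step (i), which rests on the functoriality of $\mathbf{M}_g^{hyp}$ as a moduli stack; since this argument is formally identical to the non-hyperelliptic version in \cite{LRZ}, no additional input beyond Proposition~\ref{prop:3.2.1} is needed.
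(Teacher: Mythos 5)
Your proposal is correct and follows the paper's argument essentially verbatim: the paper likewise obtains $p$ from the universal property of the moduli stack, uses $p^*\boldsymbol{\lambda}=\boldsymbol{\alpha}$ and the identity $\phi_u^*\circ p^* = p^*$ to place $p^*(f)$ in $\Gamma(\mathbf{X}_d^0,\boldsymbol{\alpha}^{\otimes h})^G$, and then invokes Proposition~\ref{prop:3.2.1}(2) to define $\sigma=\tau^{-1}\circ p^*$, with the evaluation formula following by unwinding $\tau$. No substantive difference.
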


This is the last ingredient necessary to show the analogue of Corollary 3.3.2 of \cite{LRZ}.

\begin{corollary}\label{cor:3.3.2}
Let $f \in \mathbf{S}_{g,h}(\mathbb{C})$ be a geometric Siegel modular form, $\tilde{f} \in \mathbf{R}_{g,h}(\mathbb{C})$ be the corresponding analytic modular form, and $\Phi = \sigma(\theta^*f)$ the corresponding invariant. Let further $F \in \mathbf{X}_d^0$ give rise to the curve $C_F$ equipped with the basis of regular differentials given by the forms $\omega_1, \ldots, \omega_g$ given in equation \eqref{eq:basiseta}. Then if $\Omega = \left( \begin{smallmatrix} \Omega_1 & \Omega_2\end{smallmatrix}\right)$ is a Riemann matrix for the curve $C_F$ obtained by integrating the forms $\omega_i$ against a symplectic basis for the homology group $H_1(C_F,\mathbb{Z})$ and $Z = \Omega_2^{-1}\Omega_1 \in \mathbb{H}_g$, we have
\begin{equation*}
\Phi(F) = (2 i \pi)^{gh} \frac{\tilde{f}(Z)}{\det \Omega_2^h}.
\end{equation*}
\end{corollary}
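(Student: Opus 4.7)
The plan is to simply compose the three identifications established in Propositions~\ref{prop:2.4.4}, \ref{prop:2.3.1}, and \ref{prop:3.3.1}, chaining them together while keeping careful track of which basis of differentials is being used at each step. The content of the corollary is essentially that these three compatibilities, when stacked, give the bridge between an analytic Siegel modular form $\tilde{f}$ and the invariant $\Phi$.

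First I would start from the invariant side. Applying Proposition~\ref{prop:3.3.1} to the Teichm\"uller form $\theta^*f \in \Gamma(\mathbf{M}_g^{hyp}, \boldsymbol{\lambda}^{\otimes h})$ gives
\begin{equation*}
\Phi(F) \;=\; \sigma(\theta^*f)(F) \;=\; (\theta^*f)\bigl(C_F,\,(p^*)^{-1}\omega\bigr).
\end{equation*}
Because $p^*\boldsymbol{\lambda} = \boldsymbol{\alpha}$ and $\omega = \omega_1 \wedge \ldots \wedge \omega_g$ is the canonical basis of the Hodge bundle $\boldsymbol{\alpha}$ on $\mathbf{X}_d^0$ given by \eqref{eq:basiseta}, the element $\lambda := (p^*)^{-1}\omega$ is exactly the section of $\boldsymbol{\lambda}$ over $\mathbf{M}_g^{hyp}$ which restricts on the fiber $C_F$ to the wedge $\omega_1 \wedge \ldots \wedge \omega_g$.

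Next I would invoke Proposition~\ref{prop:2.3.1} to move from Teichm\"uller modular forms back to geometric Siegel modular forms along the Torelli map:
\begin{equation*}
(\theta^*f)(C_F,\lambda) \;=\; f(\Jac C_F,\alpha),
\end{equation*}
where $\alpha$ is the basis of $\boldsymbol{\omega}_{\mathbb{C}}(\Jac C_F)$ whose pullback under $\theta$ is $\lambda$. Via the canonical identification of $H^0(\Jac C_F,\Omega^1)$ with $H^0(C_F,\Omega^1)$ induced by the Abel--Jacobi map, $\alpha$ corresponds to $\omega_1 \wedge \ldots \wedge \omega_g$. Finally, Proposition~\ref{prop:2.4.4} converts this geometric value into an analytic one: if $\Omega = (\Omega_1\ \Omega_2)$ is the Riemann matrix obtained by integrating $\alpha$, equivalently $\omega_1,\ldots,\omega_g$, against a symplectic basis of $H_1(\Jac C_F,\mathbb{Z}) \cong H_1(C_F,\mathbb{Z})$, and $Z = \Omega_2^{-1}\Omega_1 \in \mathcal{H}_g$, then
\begin{equation*}
f(\Jac C_F, \alpha) \;=\; (2\pi i)^{gh}\,\frac{\tilde{f}(Z)}{\det \Omega_2^{\,h}}.
\end{equation*}
Chaining these three equalities yields the desired formula.

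The only real obstacle, and the place that requires care rather than calculation, is the bookkeeping that ensures the three bases $(p^*)^{-1}\omega$, $\alpha$, and $\omega_1\wedge\ldots\wedge\omega_g$ are compatibly identified along the chain. This relies on the two pullback compatibilities $\theta^*\boldsymbol{\omega} = \boldsymbol{\lambda}$ and $p^*\boldsymbol{\lambda} = \boldsymbol{\alpha}$ given in the preceding subsections, together with the classical fact that the Abel--Jacobi map identifies $H^0(C_F,\Omega^1)$ with $H^0(\Jac C_F,\Omega^1)$ in a way that sends a symplectic homology basis of $C_F$ to one of $\Jac C_F$ inducing the same period matrix. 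Once this identification is unwound, no further computation is needed.
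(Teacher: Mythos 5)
Your proposal is correct and follows exactly the route the paper intends: the corollary is obtained by chaining Proposition~\ref{prop:3.3.1}, Proposition~\ref{prop:2.3.1}, and Proposition~\ref{prop:2.4.4}, with the basis bookkeeping handled by the compatibilities $p^*\boldsymbol{\lambda} = \boldsymbol{\alpha}$ and $\theta^*\boldsymbol{\omega} = \boldsymbol{\lambda}$. The paper leaves this composition implicit, so your write-up is in fact more explicit than the source, but it is the same argument.
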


The last two results display a connection between Siegel modular forms of even weight restricted to the hyperelliptic locus and invariants of binary forms of degree $2g+2$.

\section{Denominators of modular invariants and primes of bad reduction}\label{Sec:MainTheorem}

In this Section we prove our main theorem, Theorem \ref{prop:newinvariant}. The proof of this result has three main ingredients. In the previous Section, we have already adapted to the case of hyperelliptic curves a result of Lachaud, Ritzenthaler and Zykin \cite{LRZ} that connects invariants of curves to Siegel modular forms. In this Section, we now generalize a result of Lockhart \cite{Lockhart} to specifically connect the discriminant of a hyperelliptic curve to the Siegel modular form $\Sigma_{140}$ of equation \eqref{eq:sigma140}. Then, we deduce the divisibility of $\Sigma_{140}$ by an odd prime $\mathfrak{p}$ to the bad reduction of the curve using a result of K{\i}l{\i}\c{c}er, Lauter, Lorenzo Garc\'ia, Newton, Ozman, and Streng \cite{KLLNOS}.

\subsection{The modular discriminant}\label{Lockhart}

We first turn our attention to the work of Lockhart,~\cite[Definition 3.1]{Lockhart}, in which the author gives a relationship between the discriminant $\Delta$ of a hyperelliptic curve of genus $g$ given by $y^2=F(x,1)$, which is related to the discriminant $D$ of the binary form $F(x,z)$ by the relation
\begin{equation}\label{Disc}
\Delta = 2^{4g} D
\end{equation}
(see \cite[Definition 1.6]{Lockhart}),
and a Siegel modular form similar to  $\Sigma_{140}$. From a computational perspective, the issue with the Siegel modular form proposed by Lockhart is that its value, as written, will be nonzero only for $Z$ a period matrix in a certain $\Gamma(2)$-equivalence class. Indeed, on page 740, the author chooses the traditional symplectic basis for $H_1(C,\mathbb{Z})$ which is given by Mumford \cite[Chapter III, Section 5]{Mumford}. If one acts on the symplectic basis by a matrix in $\Gamma(2)$, the value of the form given by Lockhart will change by a nonzero constant (the appearance of the principal congruence subgroup of level $2$ is related to the use of half-integral theta characteristics to define the form), but if one acts on the symplectic basis by a general element of $\Sp(6,\mathbb{Z})$, the value of the form might become zero.

As explained in \cite{BILV}, in general to allow for the period matrix to belong to a different $\Gamma(2)$-equivalence class, one must attach to the period matrix an element of a set defined by Poor \cite{poor}, which we call an $\eta$-map. Therefore in general one must either modify Lockhart's definition to vary with a map $\eta$ admitted by the period matrix or use the form $\Sigma_{140}$, which is nonzero for any hyperelliptic period matrix. We give here the connection between these two options. We begin by describing the maps $\eta$ that can be attached to a hyperelliptic period matrix. We refer the reader to \cite{poor} or \cite{BILV} for full details.

Throughout, let $C$ be a smooth hyperelliptic curve of genus $g$ defined over~$\mathbb{C}$ equipped with a period matrix $Z$ for its Jacobian, and for which the branch points of the degree $2$ morphism $\pi \colon C \to \mathbb{P}^1$ have been labeled with the symbols $\{1, 2, \ldots, 2g+1, \infty\}$. We note that this choice of period matrix yields an Abel--Jacobi map,
\begin{equation*}
AJ \colon \Jac(C)  \to \C^g/(\mathbb{Z}^g+Z\mathbb{Z}^g).
\end{equation*}

We begin by defining a certain combinatorial group we will need.

\begin{definition}
Let $B = \{1, 2, \ldots, 2g+1, \infty\}$. For any two subsets $S_1, S_2 \subseteq B$, we define
\begin{equation*}
S_1 \circ S_2 = (S_1 \cup S_2) - (S_1 \cap S_2),
\end{equation*}
the symmetric difference of the two sets. For $S \subseteq B$ we also define $S^c = B - S$, the complement of $S$ in $B$. Then we have that the set
\begin{equation*}
\{S \subseteq B : \# S \equiv 0 \pmod{2} \} / \{S \sim S^c\}
\end{equation*}
is a commutative group under the operation $\circ$, of order $2^{2g}$, with identity $\emptyset \sim B$. 
\end{definition}

Given the labeling of the branch points of $C$, there is a group isomorphism (see \cite[Corollary 2.11]{Mumford} for details) between the $2$-torsion of the Jacobian of $C$ and the group $G_B$ in the following manner: To each set $S \subseteq B$ such that $\# S \equiv 0 \pmod{2}$, associate the divisor class of the divisor
\begin{equation}\label{eq:2torsion}
e_S = \sum_{i \in S} P_i - (\#S) P_{\infty}.
\end{equation}

Then we can assign a map which we denote $\eta$ by sending $S \subseteq B$ to the unique vector $\eta_S$ in $(1/2)\mathbb{Z}^{2g}/\Z^{2g}$ such that $AJ(e_S)=(\eta_S)_2 + Z (\eta_S)_1$. Since there are $(2g+2)!$ different ways to label the $2g+2$ branch points of a hyperelliptic curve $C$ of genus $g$, there are several ways to assign a map $\eta$ to a matrix $Z \in \mathcal{H}_g$. It suffices for our purposes to have one such map $\eta$.

Given a map $\eta$ attached to $Z$, one may further define a set $U_{\eta} \subseteq B$:
\begin{equation*}
U_{\eta} = \{i \in B - \{\infty\} : e_*(\eta(\{i\})) = -1 \} \cup \{\infty \},
\end{equation*}
where for $\xi = \left( \begin{smallmatrix} \xi_1 & \xi_2 \end{smallmatrix} \right) \in (1/2)\Z^{2g}$, we write
\begin{equation*}
e_*(\xi) = \exp(4\pi i \xi_1^T  \xi_2),
\end{equation*}
as in equation \eqref{eq:estar}.

Then following Lockhart \cite[Definition 3.1]{Lockhart}, we define
\begin{definition}
Let $Z \in \mathcal{H}_g$ be a hyperelliptic period matrix. Then we write
\begin{equation}
\phi_{\eta}(Z) = \prod_{T \in \mathcal{I}} \vartheta[\eta_{T \circ U_{\eta}}](0,Z)^4
\end{equation}
where $\mathcal{I}$ is the collection of subsets of $\{1,2,\ldots, 2g+1,\infty\}$ that have cardinality $g+1$.
\end{definition}

\begin{remark}
We note that in this work we write our hyperelliptic curves with a model of the form $y^2 = F(x,1)$, where $F$ is of degree $2g+2$. In other words we do not require one of the Weierstrass points of the curve to be at infinity. It is for this reason that we modify Lockhart's definition above, so that the analogue of his Proposition 3.2 holds for $F$ of degree $2g+2$ rather than $2g+1$.

The Siegel modular form that we define here is equal to the one given in his Definition 3.1 for the following reason: Because $T^c \circ U_{\eta} = (T \circ U_{\eta})^c$, it follows that $\eta_{T \circ U_{\eta}} \equiv \eta_{T^c \circ U_{\eta}} \pmod{\mathbb{Z}}$. Therefore $\vartheta[\eta_{T \circ U_{\eta}}](0,Z)$ differs from $\vartheta[\eta_{T^c \circ U_{\eta}}](0,Z)$ by at worse their sign. Since we are raising the theta function to the fourth power, the sign disappears, and the product above is equal to the product given by Lockhart, in which $T$ ranges only over the subsets of $\{1,2,\ldots, 2g+1\}$ of cardinality $g+1$, but each theta function is raised to the eighth power.
\end{remark}

We now recall Thomae's formula, which is proven in~\cite{Fay,Mumford} for Mumford's period matrix, obtained using his so-called traditional choice of symplectic basis for the homology group $H_1(C,\mathbb{Z})$, and in~\cite{BILV} for any period matrix. 

\begin{theorem}[Thomae's formula]
  Let $C$ be a hyperelliptic curve defined over $\mathbb{C}$ and fix $y^2 = F(x,1) = \prod_{i = 1}^{2g+2} (x-a_i)$ a model for $C$.  Let $\Omega = \left( \begin{smallmatrix} \Omega_1 & \Omega_2\end{smallmatrix}\right)$ be a Riemann matrix for the curve obtained by integrating the forms $\omega_i$ of equation \eqref{eq:basiseta} against a symplectic basis for the homology group $H_1(C,\mathbb{Z})$ and $Z =  \Omega_2^{-1} \Omega_1 \in \mathbb{H}_g$ be the period matrix associated to this symplectic basis. Finally, let $\eta$ be an $\eta$-map attached to the period matrix $Z$. For any subset $S$ of $B$ of even cardinality, we have that
\begin{equation*}
\vartheta[\eta_{S \circ U_{\eta}}](0,Z)^4 = c\prod_{\substack{i<j \\ i,j \in S}} (a_i - a_j) \prod_{\substack{i<j \\ i,j \not \in S}} (a_i - a_j), 
\end{equation*}
where $c$ is a constant depending on $Z$ and on the model for $C$.
\end{theorem}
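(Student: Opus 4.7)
The plan is to reduce the general statement to the classical Thomae formula of Mumford and Fay, which is already known for the period matrix $Z_0$ obtained from Mumford's traditional symplectic basis (with the branch points ordered $\{1,2,\ldots,2g+1,\infty\}$). In that setting the associated $\eta$-map $\eta^0$ and set $U_{\eta^0}$ are the canonical combinatorial ones Mumford writes down, and the identity asserts that $\vartheta[\eta^0_{S\circ U_{\eta^0}}](0,Z_0)^4$ equals the displayed Vandermonde-type product up to a constant depending only on $Z_0$ and on the model. I would import this as the base case.

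To promote the identity to an arbitrary period matrix $Z$ of the same curve, observe that any two such period matrices differ by a symplectic transformation $\gamma=\left(\begin{smallmatrix}A & B \\ C & D\end{smallmatrix}\right)\in\Sp(2g,\mathbb{Z})$, so $Z=\gamma\cdot Z_0$. The classical theta-transformation formula then gives $\vartheta[\gamma\cdot\xi](0,Z)$ in terms of $\vartheta[\xi](0,Z_0)$, with an explicit automorphic factor, an eighth root of unity $\kappa(\gamma)$, and a sign $\zeta(\xi,\gamma)$. Raising to the fourth power eliminates $\kappa$ and $\zeta$ and leaves the clean relation
\begin{equation*}
\vartheta[\gamma\cdot\xi](0,Z)^4 = \det(CZ_0+D)^{2}\,\vartheta[\xi](0,Z_0)^{4}.
\end{equation*}
Simultaneously the $\eta$-map transforms compatibly: the isomorphism $G_B\to\Jac(C)[2]$ from equation \eqref{eq:2torsion} is canonical, and only its identification with $(1/2)\mathbb{Z}^{2g}/\mathbb{Z}^{2g}$ is basis dependent, so if $\eta^0$ is attached to $Z_0$ then $\eta_T:=\gamma\cdot\eta^0_T$ defines an $\eta$-map attached to $Z$ for every even $T\subseteq B$. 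The parity pairing $e_*$ is $\gamma$-invariant, hence $U_\eta=U_{\eta^0}$ as subsets of $B$, and in particular $\eta_{S\circ U_\eta} = \gamma\cdot\eta^0_{S\circ U_{\eta^0}}$. Substituting transports the base-case identity at $Z_0$ to the claimed identity at $Z$, with $c$ absorbing the factor $\det(CZ_0+D)^2$ together with the original constant.

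The main obstacle I expect is the combinatorial bookkeeping: one must verify that the shift ``$\circ U_\eta$'' inside the characteristic is precisely what is needed to cancel the translation introduced by the $\gamma$-action on theta characteristics, so that the right-hand side, which is a symmetric function of the roots depending only on $S$, remains independent of the choice of symplectic basis and $\eta$-map. The use of fourth powers throughout is what makes this tractable, as all sign ambiguities in the theta-transformation formula and in the translation rule $\vartheta[\xi+n](0,Z)=\pm\vartheta[\xi](0,Z)$ disappear, reducing the verification to an equality of subsets in $G_B$.
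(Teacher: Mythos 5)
The paper does not actually prove this statement: it is recalled as known, with the classical case (Mumford's traditional symplectic basis) attributed to Fay and Mumford and the general case to \cite{BILV}. So your proposal is really a reconstruction of the argument behind the citation, and its overall strategy --- transport the classical Thomae identity from Mumford's period matrix $Z_0$ to an arbitrary $Z=\gamma\cdot Z_0$ via the theta transformation formula, while tracking how the $\eta$-map changes --- is the natural and intended route.

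The gap is in your displayed ``clean relation.'' Igusa's transformation formula reads $\vartheta[\gamma\cdot m](0,\gamma\cdot Z_0)=\kappa(\gamma)\,e^{2\pi i\phi_m(\gamma)}\det(CZ_0+D)^{1/2}\,\vartheta[m](0,Z_0)$, where $\kappa(\gamma)^8=1$ and, for half-integral $m$, the phase $\phi_m(\gamma)$ lies in $\tfrac{1}{8}\mathbb{Z}$ and genuinely depends on $m$. Raising to the fourth power does not remove everything: $\kappa(\gamma)^4=\pm 1$ is harmless (independent of $m$, absorbed into $c$), but $e^{8\pi i\phi_m(\gamma)}=\pm 1$ still depends on the characteristic, hence on $S$; only \emph{eighth} powers of theta constants are free of characteristic-dependent roots of unity. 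A concrete genus-one instance: under $\tau\mapsto\tau+1$ one has $\vartheta_3(0,\tau)^4\mapsto\vartheta_4(0,\tau)^4$ with no sign, but $\vartheta_2(0,\tau)^4\mapsto-\vartheta_2(0,\tau)^4$, so fourth powers are permuted only up to characteristic-dependent signs. Since the entire content of the theorem is that $c$ is independent of $S$, you must show that the residual sign $e^{8\pi i\phi_m(\gamma)}$ is the same for all characteristics of the form $m=\eta^0_{S\circ U_{\eta^0}}$ with nonvanishing theta constant, or that the discrepancy is exactly compensated by the reduction of $\gamma\cdot m$ modulo $\mathbb{Z}^{2g}$ and by the normalization of the $\eta$-map through the Abel--Jacobi relation $AJ(e_S)=(\eta_S)_2+Z(\eta_S)_1$. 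This is precisely the ``combinatorial bookkeeping'' you defer to the end, but it cannot be deferred: as written, the key displayed identity is false in general, and establishing the sign consistency is the actual mathematical content supplied by \cite{BILV}. A secondary point to tighten: $U_\eta$ is defined via $e_*(\eta(\{i\}))$ on singletons, which uses an extension of $\eta$ beyond the even-cardinality subsets forming $G_B$, so your claim $U_\eta=U_{\eta^0}$ requires checking that this extension, not merely the restriction to $G_B$, transforms as $\gamma\cdot\eta^0$.
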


We now restrict our attention to the case of genus $g = 3$ which is of interest to us in this work. We note that since $Z$ is a hyperelliptic period matrix, by \cite{igusa67} a single one of its even theta constants vanishes, and therefore we have
\begin{equation*}
\phi_{\eta}(Z)  = \Sigma_{140}(Z).
\end{equation*}

We then have the following Theorem, which is a generalization to our setting of Proposition 3.2 of \cite{Lockhart} for genus 3 hyperelliptic curves:

\begin{theorem}\label{LockhartGen}
Let $C$ be a hyperelliptic curve defined over $\mathbb{C}$ and fix $y^2 = F(x,1) = \prod_{i = 1}^{2g+2} (x-a_i)$ a model for $C$.  Let $\Omega = \left( \begin{smallmatrix} \Omega_1 & \Omega_2\end{smallmatrix}\right)$ be a Riemann matrix for the curve obtained by integrating the forms $\omega_i$ of equation \eqref{eq:basiseta} against a symplectic basis for the homology group $H_1(C,\mathbb{Z})$ and $Z =  \Omega_2^{-1} \Omega_1 \in \mathbb{H}_g$ be the period matrix associated to this symplectic basis. Then
\begin{equation}
\Delta^{15} = 2^{180}\pi^{420} \det(\Omega_2)^{-140}\Sigma_{140}(Z),
\end{equation}
where we recall that $\Delta$ is the discriminant of the model that we have fixed for $C$.
\end{theorem}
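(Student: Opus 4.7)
My plan is to apply Thomae's formula to each factor of $\phi_\eta(Z)$, recombine the theta factors into a power of the binary octic discriminant via a combinatorial count of root pairs, and then invoke the identity $\phi_\eta(Z)=\Sigma_{140}(Z)$ on the hyperelliptic locus (which has just been observed in genus $3$ from the fact that exactly one even theta constant vanishes) together with the relation $\Delta = 2^{4g}D$ from equation~\eqref{Disc} to conclude. Since everything the theorem asks for is a product of differences of roots together with explicit powers of $\det(\Omega_2)$ and $\pi$, the whole argument reduces to tracking exponents carefully.

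Writing Thomae's formula in its explicit form, $\vartheta[\eta_{T\circ U_\eta}](0,Z)^4 = c\prod_{i<j,\,i,j\in T}(a_i-a_j)\prod_{i<j,\,i,j\notin T}(a_i-a_j)$, with constant $c=\det(\Omega_2)^{2}/\pi^{2g}$ (the standard form from \cite{Fay,Mumford,BILV}), I would count multiplicities. For $g=3$ one has $|\mathcal{I}|=\binom{2g+2}{g+1}=\binom{8}{4}=70$, and a fixed pair $i<j$ lies inside $T$ for $\binom{6}{2}=15$ of the sets $T\in\mathcal{I}$ and outside $T$ for $\binom{6}{4}=15$ of them. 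Each factor $(a_i-a_j)$ therefore appears with total multiplicity $30$ in $\phi_\eta(Z)$. Recalling that $D=\prod_{i<j}(a_i-a_j)^{2}$, this yields
\begin{equation*}
\Sigma_{140}(Z) \;=\; \phi_\eta(Z) \;=\; c^{70}\prod_{i<j}(a_i-a_j)^{30} \;=\; \frac{\det(\Omega_2)^{140}}{\pi^{420}}\,D^{15}.
\end{equation*}
Substituting $D=\Delta/2^{4g}=\Delta/2^{12}$ and rearranging then gives $\Delta^{15} = 2^{180}\pi^{420}\det(\Omega_2)^{-140}\Sigma_{140}(Z)$, as desired.

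The main technical obstacle is justifying the explicit value $c=\det(\Omega_2)^{2}/\pi^{2g}$, because the version of Thomae's formula stated just above the theorem only asserts that some constant $c$ depending on $Z$ and the model exists. One has to import the fully explicit form from \cite{Fay,Mumford,BILV}, and also verify that all $70$ signs align so that the product really equals a positive $70^{\text{th}}$ power of $c$ — which can be done by specializing to a curve with real, appropriately ordered branch points so that both sides of the final identity are manifestly real and positive. Once this is in hand, the remainder of the argument is purely combinatorial bookkeeping.
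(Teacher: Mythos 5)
Your combinatorial core---applying Thomae's formula to each of the $70$ factors of $\phi_\eta(Z)$, counting that each difference $(a_i-a_j)$ occurs $\binom{6}{2}+\binom{6}{4}=30$ times, recognizing $D^{15}$, and then using $\phi_\eta(Z)=\Sigma_{140}(Z)$ together with $\Delta=2^{12}D$ from equation~\eqref{Disc}---is exactly the paper's argument, down to the same binomial coefficients. The difference, and the genuine gap, is in how the constant $c$ is pinned down. You propose to import $c=\det(\Omega_2)^2/\pi^{2g}$ as ``the standard form'' from \cite{Fay,Mumford,BILV}, but those sources do not supply it in the generality you need: Fay and Mumford prove Thomae with an explicit constant only for Mumford's traditional choice of symplectic basis, while the version valid for an arbitrary symplectic basis (\cite{BILV}, and the statement reproduced just before the theorem) leaves $c$ as an unspecified constant depending on $Z$ and the model. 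Indeed the paper remarks \emph{after} the proof that the explicit value $\pi^{-2g}\det(\Omega_2)^2$ is something the proof \emph{suggests}---it is an output of the argument, not an input one can cite. Your fallback (specialize to real, ordered branch points and check positivity) does not close this either: a sign check at one specialization can only fix an overall constant once you already know how $c$ varies with the choice of symplectic basis, and fourth powers of theta constants transform under $\Sp(6,\Z)$ by $\det(CZ+D)^2$ times a fourth root of unity, so the dependence of $c$ on the basis is precisely the point that needs an argument.

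The paper closes this gap differently: it first derives $\Delta^{15}=2^{180}c^{-70}\Sigma_{140}(Z)$ with $c$ unknown, then invokes Lockhart's Proposition 3.2 \cite{Lockhart}, which is the desired identity for the traditional period matrix $\tilde Z$ with $\det(\tilde\Omega_2)$, and transports it to an arbitrary $Z=M\cdot\tilde Z$ using the fact that $\Sigma_{140}$ is a weight-$140$ modular form for all of $\Sp(6,\Z)$ (so no root-of-unity ambiguity survives in $\Sigma_{140}(Z)=\det(C\tilde Z+D)^{140}\Sigma_{140}(\tilde Z)$) together with $\Omega_2=\tilde\Omega_1C^T+\tilde\Omega_2D^T$. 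This determines $c^{70}=\pi^{-420}\det(\Omega_2)^{140}$, which is all the theorem requires. To repair your write-up, replace the citation of the explicit constant by this modularity argument, or else prove directly the transformation law for $c$ under change of symplectic basis.
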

 
\begin{proof}
We show how to modify Lockhart's proof. We first remind the reader that $\Delta = 2^{12}D$ by \cite[Definition 1.6]{Lockhart}, where again $D$ is the discriminant of the binary form $F(x,z)$.
Then as Lockhart does, we use Thomae's formula:
\begin{equation*}
\vartheta[\eta_{T \circ U_{\eta}}](0,Z)^4 =c \prod_{\substack{i<j \\ i,j \in T}} (a_i - a_j) \prod_{\substack{i<j \\ i,j \not \in T}} (a_i - a_j), 
\end{equation*}
if $T$ is a subset of $\{1,2,\ldots, 7,\infty\}$ of cardinality $4$. Taking the product over all such $T$, we get
\begin{equation*}
\phi_{\eta}(Z) = c^{70} \prod_{T} \left(\prod_{\substack{i<j \\ i,j \in T}} (a_i - a_j) \prod_{\substack{i<j \\ i,j \not \in T}} (a_i - a_j) \right),
\end{equation*}
since $\binom{8}{4}=70$.

We now count how many times each factor of $(a_i-a_j)$ appears on the left-hand side:
\begin{align*}
\# \{ T : i, j \in T \text{ or } i, j \not \in T\} & = \#\{T: i,j \in T\} + \# \{T: i, j \not \in T\} \\
& = \binom{6}{2} + \binom{6}{4} = 2\binom{6}{4} = 30.
\end{align*}
Therefore,
\begin{align*}
\phi_{\eta}(Z) &= c^{70} \prod_{\substack{i<j \\ i,j \in B}} (a_i - a_j)^{30},\\
& = c^{70} D^{15},\\
& = 2^{-180}c^{70} \Delta^{15}.
\end{align*}

Since $\Sigma_{140}(Z)= \phi_{\eta}(Z)$ when $Z$ is hyperelliptic, we therefore get that 
\begin{eqnarray}\label{EqWithC}
\Delta^{15} = 2^{180}c^{-70}\Sigma_{140}(Z).
\end{eqnarray}

We now compute the value of the constant $c$. We denote by $\tilde{Z}$ the period matrix associated to Mumford's so-called traditional choice of a symplectic basis for the homology group $H_1(C,\mathbb{Z})$. Lockhart showed that:
\begin{eqnarray}\label{EqLockhart}
\Delta^{15} = 2^{180}\pi^{420} \det(\tilde{\Omega}_2)^{-140}\Sigma_{140}(\tilde{Z}),
\end{eqnarray} 
where $\tilde{\Omega}_2$ is the right half of the Riemann matrix obtained by integrating the basis of forms $\omega_i$ of equation \eqref{eq:basiseta} against Mumford's traditional basis for homology.

Now consider again our arbitrary period matrix $Z$ and let $M = \left( \begin{smallmatrix} A & B \\ C & D \end{smallmatrix} \right) \in \Sp(6,\mathbb{Z})$ be such that $M \cdot \tilde{Z}=Z$. Since $\Sigma_{140}$ is a Siegel modular form of weight $140$ for $\Sp(6,\Z)$, it follows that $\Sigma_{140}(Z)=\det(C\tilde{Z}+D)^{140}\Sigma_{140}(\tilde{Z})$ and combining equations \eqref{EqWithC} and \eqref{EqLockhart}, we obtain
\begin{align*} 
c &=\pi^{-6} \det(C\tilde{Z}+D)^{2}\det(\tilde{\Omega}_2)^{2}\\
&= \pi^{-6} \det(\tilde{Z}C^T+D^T)^{2}\det(\tilde{\Omega}_2)^{2} \\
& =\pi^{-6}\det(\Omega_2)^{2}.
\end{align*}
To obtain the last equality we used the fact that $\Omega_2=\tilde{\Omega}_1C^T+\tilde{\Omega}_2D^T$.
This concludes the proof.
\end{proof}

Up to the factors of $2$ appearing in the formula, this Theorem therefore realizes Corollary \ref{cor:3.3.2}, as it connects explicitly an invariant of a hyperelliptic curve to a Siegel modular form. We furthermore note that the proof above suggests that the constant $c$ in Thomae's formula for a general period matrix $Z$ of the Jacobian of a hyperelliptic curve of genus $g$ is $\pi^{-2g} \det(\Omega_2)^{2}$. Finally, the proof of Theorem~\ref{LockhartGen} could be easily generalized to genus $g > 3$ if $\phi_{\eta}$ were shown to be a modular form for $\Sp(2g,\mathbb{Z})$. We believe this is true, but leave it as future work.

\begin{remark}
Let $n_g = \binom{2g+1}{g}$. In \cite[p. 291]{Salvati}, for $Z$ a hyperelliptic period matrix, the author defines the set $M=(\xi_1, \ldots, \xi_{n_g})$ to be the unique, up to permutations, sequence of mutually distinct even characteristics satisfying: 
\begin{equation*}
P(M)(Z)=\vartheta[\xi_1](0,Z)\vartheta[\xi_2](0,Z)\ldots \vartheta[\xi_{n_g}](0,Z)\neq 0.
\end{equation*} 
We note that the eighth power of this form is exactly the form which we denote here by $\phi_{\eta}$, since $\theta[\eta_{T\circ U_{\eta}}](0,Z) \neq 0$ if and only if $T$ has cardinality $g+1$, by Mumford and Poor's Vanishing Criterion for hyperelliptic curves \cite[Proposition 1.4.17]{poor}, and in the product giving $\phi_{\eta}$, each characteristic appears twice.

The author then introduces the modular form
\begin{equation*}
F(Z)=\sum _{\sigma \in \Sp(2g,\F_2)}P(\sigma \circ M)^8(Z),
\end{equation*}
where $\Sp(2g,\F_2)$ is the group of $2g \times 2g$ symplectic matrices with entries in $\F_2$, and where the action of $\Sp(2g,\F_2)$ on the set $M$ is given in the following manner: For $\sigma = \left( \begin{smallmatrix} A & B \\ C & D \end{smallmatrix} \right) \in \Sp(2g,\F_2)$ and $m \in (1/2)\Z^{2g} \pmod{\Z^{2g}}$ a characteristic, we write
\begin{equation*}
\sigma \circ m = \begin{pmatrix} D & - C \\ -B & A \end{pmatrix} m + \begin{pmatrix} \diag(C^TD) \\ \diag(A^TB) \end{pmatrix}.
\end{equation*}
Then $P(\sigma \circ M)$ is simply the form $P(M)$ but with each characteristic $\xi_i$ replaced with $\sigma \circ \xi_i$.

For $Z$ a hyperelliptic matrix, $P(\sigma \circ M)(Z)$ is nonzero exactly when $\sigma \circ M$ is a permutation of $M$, by definition of the set $M$. Therefore, up to a constant, $F$ is simply $\Sigma_{140}$ on the hyperelliptic locus, and therefore $F-\Sigma_{140}$ is of the form $a\chi_{18}$ for $a\in \C$.

The author then proves that $\rho(F)=D^{gn_g/(2g+1)}$, where as before $D$ is the discriminant of the binary form $F(x,z)$ such that the hyperelliptic curve is given by the equation $y^2 = F(x,1)$. We note that the power given here corrects an error in the manuscript \cite{Salvati}, and agrees with the result we obtain in this paper.
\end{remark}

\subsection{Proof of Theorem \ref{prop:newinvariant}}

We are now in a position to prove Theorem \ref{prop:newinvariant}. For simplicity, we replace $f^{\frac{140}{\gcd(k,140)}}$ with $\tilde{h}$, a Siegel modular form of weight $\tilde{k} = \frac{140k}{\gcd(k,140)}$, and let $\ell = \frac{k}{\gcd(k,140)}$. Note that $\tilde{k} = 140 \ell$ and is divisible by $4$.

Using the notation of Section \ref{Sec:ModularInvs}, the analytic Siegel modular form $\tilde{h}$ corresponds to a geometric Siegel modular form $h$ by Proposition \ref{prop:2.2.1}. Let $\Phi = \sigma(\theta^* h)$ be the corresponding invariant of the hyperelliptic curve. Then by Corollary \ref{cor:3.3.2}, if the hyperelliptic curve $y^2 = F(x,1)$ has period matrix $Z$, we have
\begin{equation*}
\Phi(F) = (2 \pi i)^{3\tilde{k}} \det(\Omega_2)^{-\tilde{k}} h(Z).
\end{equation*}
Therefore we have

\begin{align*}
j(Z) = \frac{h}{\Sigma_{140}^\ell}(Z) & = \frac{(2 \pi i)^{-3\tilde{k}} \det(\Omega_2)^{\tilde{k}} \Phi(F)}{\pi^{-420\ell} \det(\Omega_2)^{140\ell}\Disc(F)^{15\ell}}\\
& = 2^{-\frac{140k}{\gcd(k,140)}}\frac{\Phi(F)}{\Disc(F)^{15\ell}}.
\end{align*}

We note that since $\Phi$ is assumed to be an integral invariant, it does not have a denominator when evaluated at $F \in \mathbb{Z}[x,z]$. We have thus obtained an invariant as in~\cite[Theorem 7.1]{KLLNOS} (we note that \emph{loc. cit.} assumes throughout that invariants of hyperelliptic curves are integral, see the discussion between Proposition 1.4 and Theorem 1.5), having negative valuation at the prime $\mathfrak{p}$. We conclude that $C$ has bad reduction at this prime.

\section{Computing modular invariants}\label{Sec-NewInv}

In this Section, we consider certain modular functions having $\Sigma_{140}$ in the denominator. We then present a list of hyperelliptic curves of genus 3 for which we computed the primes of bad reduction. As illustration of Theorem~\ref{prop:newinvariant}, we  implemented and computed with high precision the modular functions involving the form $\Sigma_{140}$ at period matrices corresponding to curves in our list. 

\subsection{Computation of the modular invariants}

For a given hyperelliptic curve model, we used the Molin-Neurohr Magma code~\cite{Molin} to compute a first period matrix and then applied the reduction algorithm given in~\cite{KLLRSS} and implemented by Sijsling in Magma to obtain a so-called reduced period matrix that is $\Sp(2g,\Z)$-equivalent to the first matrix, but that provides faster convergence of the theta constants.

Once we obtained a reduced period matrix $Z$, using Labrande's Magma implementation for fast theta function evaluation~\cite{labrande}, we computed the $36$ even theta constants for these reduced period matrices, up to 30,000 bits of precision.
\footnote{Apart from curves (1) and (6), we could recognize these values as algebraic numbers with 15,000 bits of precision; for curve (1), we needed 30,000 bits of precision. In fact, for CM field (6), the theta constants obtained using the Magma implementation~\cite{labrande} for high precision (i.e. $\geq 30,000$ bits) were not conclusive. We therefore ran an improved implementation of the naive method to get these values up to 30,000 bits of precision, and recognized the invariants as algebraic numbers after multiplying by the expected denominators.}
 Finally, from these theta constants we computed the modular invariants that we define below. 

 To define our modular invariants, we consider the following Siegel modular forms. Let $h_4$ be the Eisenstein series of weight $4$ given by 
\begin{equation}\label{h4}
h_4(Z)=\frac{1}{2^3}\sum_{\xi}\theta[\xi]^8(Z),
\end{equation}
where $\xi$ ranges over all even theta characteristics. We denote by $\alpha_{12}$ the modular form of weight 12 defined by Tsuyumine~\cite{Tsuyumine2}:
\begin{equation}\label{alpha12}
\alpha_{12}(Z)=\frac{1}{2^3\cdot 3^2}\sum_{(\xi_i)}(\theta[\xi_1](Z)\theta[\xi_2](Z)\theta[\xi_3](Z)\theta[\xi_4](Z)\theta[\xi_5](Z)\theta[\xi_6](Z))^4,
\end{equation}
where $(\xi_1,\xi_2,\xi_3,\xi_4,\xi_5,\xi_6)$ is a maximal azygetic system of even theta characteristics. By this we mean that $(\xi_i)$ is a sextuple of even theta characteristics such that the sum of any three among these six is odd. Notice that $\alpha_{12}$ is one of the 35 generators given by Tsuyumine~\cite{Tsuyumine1} of the graded ring $A(\Gamma_3)$ of modular forms of degree 3 and cannot be written as a polynomial in Eisenstein series.

In the computations below, we consider thus the following three modular functions:

\begin{equation}\label{classInvariants}
j_1(Z) = \frac{h_4^{35}}{\Sigma_{140}}(Z), \hspace{1cm} 
j_2(Z) = \frac{\alpha_{12}^{35}}{\Sigma_{140}^3}(Z),\hspace{1cm}
j_3(Z) = \frac{h_4^{5}\alpha_{12}^{10}}{\Sigma_{140}}(Z).\hspace{1cm}
\end{equation}

\subsection{Invariants of hyperelliptic curves of genus 3}

We say that a genus 3 curve $C$ over a field $M$ has complex multiplication (CM) by an order $\O$ in a sextic CM field $K$
if there is an embedding $\O \into \End(\Jac(C)_{\overline{M}})$. 
The curves numbered (1)--(8) below are the conjectural complete list of
hyperelliptic CM curves of genus $3$ that are defined over $\Q$. 
As we mentioned in the introduction, they are taken from a list 
that can be found in \cite{KS2017}. 
We note more specifically that the curves (5), (6) and (8) were 
found by Balakrishnan, Ionica, \Kilicer, Lauter, Somoza, Streng, 
and Vincent, and (1), (2), (3), and (7) were computed by Weng~
\cite{Weng}. Moreover, the hyperelliptic model of the curve with 
complex multiplication by the ring of integers in CM field (3) 
was proved to be correct by Tautz, Top, and Verberkmoes 
\cite[Proposition 4]{TTV91}, and the hyperelliptic model of the 
curve with complex multiplication by the ring of integers in CM 
field (4) was given by Shimura and Taniyama~\cite{Shimura} (see  
Example (II) on page 76). For these examples, $\O_K$ denotes the 
ring of integers of the CM field $K$.  

The curves numbered (9)--(10) are non-CM hyperelliptic curves presented in \cite{Chabauty}. They were already used there for experiments, this time related to the Chabauty-Coleman method.  The curves numbered (11)--(13) are non-CM modular hyperelliptic curves; a list, which contains $X_0(33), X_0(39)$ and $X_0(41)$, of modular hyperelliptic curves are given by Ogg \cite{Ogg}, then Galbraith in his Ph.D thesis writes equations for these curves \cite{GalbraithThesis}. 

When we say that a prime is of bad reduction, we will mean that it is a prime of geometrically bad reduction of the curve.
For each curve below, the odd primes of bad reduction are computed using the results in \cite[Section 3]{HypRed} if $p>7$ and in Proposition 4.5 and Corollary 4.6 in~\cite{BW} if $p=3,5,7$. We denote the discriminant of a curve $C$ by $\Delta$, as before.
  
  \begin{itemize}
\item[(1)](\cite[\S 6 - 3rd ex.]{Weng}) 
Let $K = \Q[x]/(x^6+13x^4+50x^2+49)$, which is of class number 1 and contains $\Q(i)$. 
A model for the hyperelliptic curve with CM by $\O_K$ is 
\begin{equation*}
C: y^2 = x^7+1786x^5+44441x^3+278179x 
\end{equation*}
with $\Delta = -2^{18}\cdot 7^{24}\cdot 11^{12}\cdot 19^7$. The odd primes of bad reduction of $C$ are $7$ and $11$.

\mbox{}

\item[(2)] (\cite[\S 6 - 2nd ex.]{Weng}) Let $K = \Q[x]/(x^6 + 6x^4 + 9x^2 + 1)$, which is of class number $1$ and contains $\Q(i)$.  A model for the hyperelliptic curve with CM by $\O_K$ is 
\begin{equation*}
C: y^2 = x^7+6x^5+9x^3+x 
\end{equation*}
with $\Delta = - 2^{18} \cdot 3^8$. The only odd prime of bad reduction of $C$ is $3$.
\mbox{}

\item[(3)](\cite[\S 6 - 1st ex.]{Weng})  Let $K = \Q[x]/(x^6 + 5x^4 + 6x^2 + 1) = \Q(\zeta_7 + \zeta_7^{-1}, i)$, which is of class number $1$. A model for the hyperelliptic curve with CM by $\O_K$ is 
\begin{equation*}
C: y^2 = x^7+7x^5+14x^3+7x 
\end{equation*}
with $\Delta = - 2^{18} \cdot 7^7$. The curve $C$ has good reduction at each odd $p\neq 7$ and potentially good reduction at $p=7$.

\mbox{}

\item[(4)] Let $K = \Q[x]/(x^6 + 7x^4 + 14x^2 + 7) = \Q(\zeta_7)$, which is of class number 1 and contains $\Q(\sqrt{-7})$. A model for the hyperelliptic curve with CM by $\O_K$ is 
\begin{equation*}
C: y^2 = x^7-1 
\end{equation*}
with $\Delta = - 2^{12}\cdot 7^7$. The curve $C$ has good reduction at each odd $p\neq 7$ and potentially good reduction at $p=7$.

\mbox{}

\item[(5)] Let $K = \Q[x]/(x^6 + 42x^4 + 441x^2 + 847)$, which is of class number $12$ and contains $\Q(\sqrt{-7})$. 
 A model for the hyperelliptic curve with CM by $\O_K$ is 
\begin{equation*}
	C: y^2 + x^4y= - 7x^6 + 63x^4 - 140x^2 + 393x - 28
\end{equation*}
 with $\Delta = - 3^8 \cdot 5^{24} \cdot 7^{7}$. The odd primes of bad reduction of $C$ are $3$ and $5$.
\mbox{}

\item[(6)] Let $K = \Q[x]/(x^6 + 29x^4 + 180x^2 + 64)$, which is of class number $4$ and contains $\Q(i)$. A model for the hyperelliptic curve with CM by $\O_K$ is 
\begin{equation*}
C: y^2 = 1024x^7 - 12857x^5 + 731x^3 + 688x
\end{equation*}
with $\Delta = - 2^{60} \cdot 11^{24} \cdot 43^7$. The only odd prime of bad reduction of $C$ is $11$.
\mbox{}

\item[(7)] (\cite[\S 6 - 4th ex.]{Weng}) Let $K = \Q[x]/(x^6 + 21x^4 + 116x^2 + 64)$, which is of class number $4$ and contains $\Q(i)$. A model for the hyperelliptic curve with CM by $\O_K$ is 
\begin{equation*}
C: y^2 = 64x^7 - 124x^5 + 31x^3 + 31x
\end{equation*}
with $\Delta = - 2^{44} \cdot 31^{7}$. The curve has potentially good reduction at 31.

\mbox{}

\item[(8)] Let $K = \Q[x]/(x^6 +42x^4 +441x^2 +784)$, which is of class number $4$ and contains $\Q(i)$. A model for the hyperelliptic curve with CM by $\O_K$ is 
\begin{equation*}
C: y^2 = 16x^7 + 357x^5 - 819x^3 + 448x 
\end{equation*}
with $\Delta = - 2^{48} \cdot 3^8 \cdot 7^7$. 
The only odd prime of bad reduction of $C$ is $3$.

\mbox{}

\item[(9)]~(\cite{Chabauty}) The hyperelliptic curve  
\begin{equation*}
C : y^2 = 4x^7 + 9x^6 - 8x^5 - 36x^4 - 16x^3 + 32x^2 + 32x + 8
\end{equation*}
is a non-CM curve with $\Delta =2^{37} \cdot 1063$.
The only odd prime of bad reduction of $C$ is $1063$.

\mbox{}

\item[(10)]~(\cite{Chabauty}) The hyperelliptic curve  
\begin{equation*}
C : y^2 = -4x^7 + 24x^6 - 56x^5 + 72x^4 - 56x^3 + 28x^2 - 8x + 1
\end{equation*}
is a non-CM curve with $\Delta =- 2^{28} \cdot 3^4 \cdot 599$.
The odd primes of bad reduction of $C$ are $3$ and $599$.
 \mbox{}

\item[(11)]~(\cite{GalbraithThesis}\cite{Ogg}) The hyperelliptic curve  
\begin{equation*}
C : y^2 =  x^8 + 10x^6 - 8x^5 + 47x^4 - 40x^3 + 82x^2 - 44x + 33
\end{equation*}
is the modular curve $X_0(33)$.
It has $\Delta =2^{28} \cdot 3^{12} \cdot 11^6$.
The odd primes of bad reduction of $C$ are $3$ and $11$.
\mbox{}

\item[(12)] (\cite{GalbraithThesis}\cite{Ogg}) The hyperelliptic curve  
\begin{equation*}
C : y^2 =  x^8 - 6x^7 + 3x^6 + 12x^5 - 23x^4 + 12x^3 + 3x^2 - 6x + 1
\end{equation*}
is the modular curve $X_0(39)$.
It has $\Delta =2^{28} \cdot 3^8 \cdot 13^4$. The  odd primes of bad reduction of $C$ are $3$ and $13$.
\mbox{}

\item[(13)] (\cite{GalbraithThesis}\cite{Ogg}) The hyperelliptic curve  
\begin{equation*}
C : y^2 = x^8 - 4x^7 - 8x^6 + 10x^5 + 20x^4 + 8x^3 - 15x^2 - 20x - 8
\end{equation*}
 is the modular curve $X_0(41)$.
It has $\Delta = - 2^{28} \cdot 41^6$. The only odd prime of bad reduction of $C$ is $41$. 
\end{itemize}  

\vspace{0.5 cm}  
  
We recall that the discriminant $\Delta$ of a hyperelliptic curve $C$ of genus $3$ is an invariant of degree $14$ (Section $1.5$ of \cite{LerRit}). For our computations, we considered the following absolute\footnote{An \emph{absolute} invariant is a ratio of homogeneous invariants of the same degree.} invariants, derived using the Shioda invariants:
\begin{align}\label{shioda}
J_2^7/\Delta, J_3^{14}/\Delta^3, J_4^7/\Delta^2, J_5^{14}/\Delta^5, J_6^7/\Delta^3, J_7^2/\Delta, J_8^7/\Delta^4, J_9^{14}/\Delta^9, J_{10}^7/\Delta^5.
\end{align}

The numerical data in Table~\ref{tableh} shows the tight connection between the odd primes appearing in the denominators of these invariants, the odd primes of bad reduction for the hyperelliptic curve, and the odd primes dividing the denominators of $j_1,j_2$ and $j_3$. In the denominators of $j_1,j_2$ and $j_3$, we intentionally omitted the denominators of the formulae~\eqref{h4} and~\eqref{alpha12}, i.e. $2^3$ and $2^3\cdot 3^2$. Note that we do not have a proof for the fact that $h_4$ and $\alpha_{12}$ fulfill the condition in Theorem~\ref{prop:newinvariant}, i.e. that their corresponding curve invariants are integral. One can see that for all the curves we considered, a prime $\geq 3$ appears in the denominator of these modular invariants if and only if it is a prime of bad reduction for the curve. Our results are evidence that either the condition in Theorem~\ref{prop:newinvariant} is a reasonable one, or that the result in this theorem may be extended to a larger class of modular forms. 

Note that the Shioda invariants $J_2, J_3, \ldots, J_{10}$ are not integral and their denominators factor as products of powers of 2,3,5 and 7 (see~\cite{LerRit} for a set of formulae). This is the reason why these primes may appear in the denominators of the Shioda invariants, even when they are not primes of bad reduction. However, one can see that the primes $>7$ appearing in the denominators of the invariants in Eq.~\eqref{shioda} are exactly the primes of bad reduction, which confirms Theorem 7.1 in~\cite{KLLNOS}. In the Table,  all the entries marked by $-$ represent values equal to zero. 
  
\begin{landscape}
\begin{table}
  \caption{Denominators of invariants} \centering \label{tableh}
 \resizebox{\textwidth}{!}{
\begin{tabular}{|c|c|c|c|c|}
\hline
  &   & Odd primes of &  denominators & Odd primes in the deno-   \\
 \small Curve & Discriminant & bad reduction & of $j_1,j_2,j_3$ & minators of invariants in Eq.~\ref{shioda} \\
\hline\hline
\multirow{3}{*}{(1)} & \multirow{3}{*}{$-2^{18}\cdot 7^{24}\cdot 11^{12}\cdot 19^7$} & \multirow{3}{*}{$7,11$} & $-7^{80}\cdot 11^{40}$ &  $7^{31}\cdot11^{12}, - ,7^{76}\cdot11^{24}$ \\
 & & & $7^{240}\cdot 11^{120}$ & $-,7^{114}\cdot11^{36},-$\\
 & & & $7^{80}\cdot 11^{40}$  & $5^7\cdot7^{159}\cdot11^{41},-,5^7\cdot7^{197}\cdot11^{60}$\\
\hline
\multirow{3}{*}{(2)} & \multirow{3}{*}{$-2^{12}\cdot 3^8$} & \multirow{3}{*}{$3$} & 1 &  $3^8\cdot7^7,-,3^{23}\cdot7^{28}$\\
& & & $2^3\cdot 3^{12}$  & $-,3^{38}\cdot7^{42},-$\\
& & & $1$ & $3^{32}\cdot5^7\cdot7^{63},-,3^{47}\cdot5^7\cdot7^{77}$\\
\hline
\multirow{3}{*}{(3)} & \multirow{3}{*}{$-2^{18}\cdot 7^7$} &  \multirow{3}{*}{none} & $1$ & $1,-,7^{14}$ \\
& &  & $2^3$ & $-,7^{21},-$\\
& & & $1$ & $5^7\cdot7^{35},-,5^7\cdot7^{42}$\\
\hline
\multirow{3}{*}{(4)} & \multirow{3}{*}{$-2^{12}\cdot 7^7$} &  \multirow{3}{*}{none} & $1$ & $-,-,-$ \\
& &  & $2^3$ & $-,-,7^7$\\
& & & $1$ & $-,-,-$\\
\hline
\multirow{3}{*}{(5)} & \multirow{3}{*}{$-3^8\cdot 5^{24}\cdot 7^{7}$} &  \multirow{3}{*}{3,5} 
& $-$ & $3^8\cdot 5^{31},5^{100},3^{23}\cdot 5^{41}$ \\
& &  & $2^3 \cdot 3^{12} \cdot 5^{240}$ & $3^{12}\cdot 5^{120},3^{38}\cdot 5^{72},3^6\cdot 5^{26}$\\
& & & $-$ & $3^{32}\cdot 5^{103},3^{72}\cdot 5^{216},3^{47}\cdot 5^{120}$\\
\hline
\multirow{3}{*}{(6)} & \multirow{3}{*}{$-2^{60}\cdot11^{24}\cdot 43^{7}$} &  \multirow{3}{*}{11} 
& $2^{125}\cdot 11^{80}$ & $7^7\cdot11^{24}, -, 7^{28}\cdot11^{48}$ \\
& &  & $2^{413}\cdot 11^{240}$ & $-,  7^{42}\cdot11^{72}, -$\\
& & & $2^{135}\cdot 11^{80}$ & $ 5^7\cdot7^{77}\cdot11^{96}, -, 5^7\cdot7^{77}\cdot11^{120}$\\
\hline
\multirow{3}{*}{(7)} & \multirow{3}{*}{$-2^{44}\cdot 31^{7}$} &  \multirow{3}{*}{none} & $2^{25}$ & $7^7,-,7^{28}$ \\
& &  & $2^{113}$ & $-,7^{42}, -$\\
& & & $2^{35}$ & $5^7\cdot7^{63},-,5^7\cdot7^{77}$\\
\hline
\multirow{3}{*}{(8)} & \multirow{3}{*}{$-2^{48}\cdot 3^{8}\cdot 7^{7}$} &  \multirow{3}{*}{3} 
     & $2^{85}$ & $ 3^{8}, -,  3^{23}\cdot7^{14}$ \\
& &  & $2^{293}\cdot 3^{12}$ & $-, 3^{38}\cdot7^{21}, -$\\
 & & & $2^{95}$ & $ 3^{32}\cdot5^7\cdot7^{35}, -,  3^{47}\cdot5^7\cdot7^{42}$\\
\hline
\multirow{3}{*}{(9)} & \multirow{3}{*}{$2^{37} \cdot 1063$} &  \multirow{3}{*}{1063} 
     & $ 2^{60} \cdot 1063^{15}$ & $ 5^7 \cdot 7^7 \cdot 1063,  5^{28} \cdot7^{42} \cdot1063^3, 3^7\cdot 7^{28}\cdot 1063^2, $ \\
& &  & $1063^{10}$ & $5^{14}\cdot 7^{70}\cdot 1063^5, 3^{14}\cdot 7^{35}\cdot 1063^3,5^2\cdot 7^{14}\cdot 1063, $\\
 & & & $1063^5$ & $3^7\cdot 5^{14}\cdot 7^{63}\cdot 1063^4, 5^{14}\cdot 7^{126}\cdot 1063^9, 3^{14}\cdot 5^{14}\cdot 7^{70}\cdot 1063^5$\\
\hline
\multirow{3}{*}{(10)} & \multirow{3}{*}{$- 2^{28}\cdot 3^4 \cdot 599$} &  \multirow{3}{*}{3,599} 
     & $ 599^{15}$ & $5^7 \cdot 7^7\cdot 599, 5^{28}\cdot 7^{42}\cdot 599^3, 3\cdot 7^{28}\cdot 599^2, $ \\
& &  & $3^5\cdot 599^{10}$ & $3^6\cdot  5^{14}\cdot 7^{70}\cdot 599^5, 7^{42}\cdot 599^3,7^{14}\cdot 599, $\\
 & & & $599^5$ & $3^2\cdot 5^7\cdot 7^{63}\cdot 599^4,  5^{14}\cdot 7^{126}\cdot 599^9, 5^{14}\cdot 7^{77}\cdot 599^5$\\
\hline
\multirow{3}{*}{(11)} & \multirow{3}{*}{$2^{28}\cdot 3^{12}\cdot 11^6$} &  \multirow{3}{*}{3,11} 
     & $ 3^{40}\cdot 11^{90}$ & $3^{12}\cdot 5^7\cdot 11^6, \cdot3^8 \cdot5^{28}\cdot 7^{42}\cdot 11^{18},3^{31}\cdot 7^{21}\cdot 11^{12},$ \\
& &  & $3^{50}\cdot 11^{60}$ & $3^{60} \cdot 5^{14}\cdot 7^{56}\cdot 11^{30}, 3^{50}\cdot 7^{42}\cdot 11^{18}, 3^{14}\cdot 7^{12}\cdot 11^6,$\\
 & & & $3^{20}\cdot 11^{30}$ & $3^{41}\cdot 5^7\cdot 7^{49}\cdot 11^{24},3^{136}\cdot 5^{14}\cdot 7^{126}\cdot 11^{54}, 3^{60} \cdot 5^{14}\cdot 7^{70}\cdot 11^{30}$\\
\hline
\multirow{3}{*}{(12)} & \multirow{3}{*}{$2^{28}\cdot3^8\cdot13^4$} & \multirow{3}{*}{3,13} &  $2^{135} \cdot 3^{120} \cdot 13^{60}$& $3\cdot5^7\cdot 7^7\cdot 13^4, 3^{10}\cdot 5^{28}\cdot 7^{42}\cdot 13^{12}, 7^{28}\cdot 13^8$\\
& & &  $3^{45}\cdot13^{40}$ & $5^{14} \cdot7^{70}\cdot 13^{20},7^{42}\cdot 13^{12}, 5^2\cdot 7^{14}\cdot 13^2$\\
& & & $3^{30} \cdot 13^{20}$ & $5^{14}\cdot 7^{63}\cdot 13^{16}, 5^{14}\cdot 7^{126}\cdot 13^{36}, 5^{14} \cdot7^{77}\cdot 13^{20}$\\
\hline 
\multirow{3}{*}{(13)} & \multirow{3}{*}{$ - 2^{28}\cdot  41^6$} & \multirow{3}{*}{41} &  $ 2^{135}\cdot  41^{90}$  & $7^7 \cdot41^6, 7^{42}\cdot 41^{18},3^7 \cdot7^{28}\cdot 41^{12}$\\
& & & $41^{60}$ & $7^{70}\cdot 41^{30}, 3^{14}\cdot 7^{42}\cdot 41^{18}, 3^2 \cdot7^{14}\cdot 41^4$\\
& & & $41^{30}$ & $3^7 \cdot5^7\cdot 7^{63} \cdot41^{24},3^{28} \cdot7^{126}\cdot 41^{54}, 3^{14}\cdot 5^7 \cdot7^{77}\cdot 41^{30}$ \\
\hline
\end{tabular}}
\end{table} 
\end{landscape}

We note that because of its large weight, $\Sigma_{140}$ is expensive to compute, so the modular invariants computed here may not be the most convenient to use from a computational point of view. As suggested by Lockhart \cite[p. 741]{Lockhart}, it might be worth finding a Siegel modular form that corresponds to a lower power of the discriminant, especially if one is to pursue further the goal of finding modular expressions for the Shioda invariants.
We note that Tsuyumine~\cite{Tsuyumine1} introduced the modular form $\chi_{28}$ of weight 28 such that $\rho(\chi_{28})=D^3$, where as earlier $D$ is the discriminant of the binary form $F(x,z)$ such that the hyperelliptic curve is given by $y^2 = F(x,z)$.
The reason for which we chose to work with $\Sigma_{140}$ in the computations is because it was straightforward to implement.
 
Finally, we note that in the non-hyperelliptic curve case, one could show with similar reasoning as in Theorem~\ref{prop:newinvariant} that a modular function having a power of $\chi_{18}$ in the denominator, when evaluated at a plane quartic period matrix, has denominator divisible by the primes of bad reduction or of hyperelliptic reduction of the curve associated to the period matrix. In this direction, a relationship between $\chi_{18}$ and the discriminant of the non-hyperelliptic curve was shown by Lachaud, Ritzenthaler, and Zykin~\cite[Theorem 4.1.2, Klein's formula]{LRZ}.

\section{Conclusion}
We have displayed a connection between the values of certain geometric modular forms of even weight restricted to the hyperelliptic locus and the primes of bad reduction of hyperelliptic curves. A complete description of the Shioda invariants of hyperelliptic curves in terms of modular forms deserves further investigation. However, our result, combined with the bounds obtained in~\cite{KLLNOS} on primes of bad reduction for hyperelliptic curves, yields a bound on the primes appearing in the denominators of modular invariants.

\bibliographystyle{plain}
\bibliography{sample}
\end{document}